\def\Exp{\mathop{\mbox{\textup{Exp}}}\nolimits}
\newcommand{\fibonomial}{\genfrac{\{}{\}}{0pt}{}}
\newcommand{\C}{\mathbb{C}}
\newcommand{\K}{\mathbb{K}}
\newcommand{\N}{\mathbb{N}}
\newcommand{\R}{\mathbb{R}}
\newcommand{\Per}{\mathbb{P}}
\newcommand{\e}{\mathrm{e}}
\newcommand{\EE}{\mathrm{E}}
\newtheorem{theorem}{Theorem}
\newtheorem{definition}{Definition}
\newtheorem{proposition}{Proposition}
\newtheorem{corollary}{Corollary}
\newtheorem{example}{Example}
\begin{document}

\title{\textbf{First Order Linear Proportional Difference Equation with Integration Factor the $(s,t)$-Pantograph Function}}
\author{Ronald Orozco L\'opez}
\newcommand{\Addresses}{{
  \bigskip
  \footnotesize

  \textit{E-mail address}, R.~Orozco: \texttt{rj.orozco@uniandes.edu.co}
  
}}

\maketitle

\tableofcontents

\begin{abstract}
In this paper we find solutions to first-order linear proportional difference equations via the $(s,t)$-integration factor method. The $(s,t)$-integration factor involves the $(s,t)$-Pantograph function, which is a generalization of the partial Theta function. Other equations are solved including the $(s,t)$-analogue of the Bernoulli equation.
\end{abstract}
\noindent 2020 {\it Mathematics Subject Classification}:
Primary 39B22. Secondary 11B39, 34K06.

\noindent \emph{Keywords: } Proportional equation, generalized Fibonacci polynomials, partial Theta function, Bernoulli equation.

\section{Introduction}

Orozco \cite{orozco} studied the existence of equations in differences with proportional delay of the form
\begin{equation}
    y(ax)-y(bx)=(a-b)x f(x,y(x),y(ux)),\ y(\eta)=\xi,
\end{equation}
where $a,b,u\in\R$. In this paper, we will be mainly interested in solving equations in proportional differences of the form
\begin{equation}\label{eqn_foplde}
    y(ax)-y(bx)=(a-b)x(\beta(x)-\alpha(x)y(u x)),
\end{equation}
or equivalently
\begin{equation*}
    \mathbf{D}_{a,b}y(x)+\alpha(x)y(ux)=\beta(x),
\end{equation*}
by means of the integration factor method involving the $(s,t)$-Pantograph function
\begin{equation*}
    \EE_{s,t}(a,b;x,u)=\sum_{n=0}^{\infty}(a\oplus b)_{1,u}^{(n)}\frac{x^{n}}{\brk[c]{n}_{s,t}!}
\end{equation*}
which is a solution of the equation
\begin{equation}\label{eqn_panto_st}
    \mathbf{D}_{s,t}y(x)=ay(x)+by(ux).
\end{equation}
The special case $s=2$ and $t=-1$ is studied in \cite{ise_1,ise_2}.
The function $\EE_{s,t}(a,b;x,u)$ generalises the $(s,t)$-exponential and the partial Theta function $\Theta_{0}(x,y)$ of Ramanujan. Some solutions will then be expressed in terms of partial Theta functions. Since the integration factor $I=\exp(\int_{\eta}^{x}\alpha(t)dt)$ for ordinary differential equations involves the composition of functions, then it will be necessary to introduce a type of composition analogous to the $q$-composition of Gessel \cite{gessel} and Johnson \cite{john}.

In addition, we shall give a power series solution to the equation
\begin{equation*}
    y(ax)-y(bx)=(a-b)x(\alpha(cy(x)+dy(ux))+\beta(x)),
\end{equation*}
and we will provide a solution via operators for the equation
\begin{equation*}
    y(ax)-y(bx)=(a-b)x(c\beta y(x)+\gamma y(ux)+\delta\EE_{s,t}(c,d;\alpha x,u)).
\end{equation*}
Finally, we will consider the deformed $(s,t)$-Bernoulli equation
\begin{multline*}
    D_{\varphi^{n-1},\varphi^{\prime(n-1)}}y(x)+\alpha(x)\frac{(D_{q}E_{s,t})(a,b;x,u)\square A(x)}{\EE_{s,t}[a,b;A(\varphi_{s,t}x),u]}y(\varphi_{s,t}^{n-1}x)
    =\beta(x)\prod_{j=0}^{\infty}\frac{y(q^{j}\varphi_{s,t}^{n-1}x)}{y(q^{n+j}\varphi_{s,t}^{n-1}x)},
\end{multline*}
where
\begin{equation*}
    D_{\varphi^{n-1},\varphi^{\prime(n-1)}}f(x)=\frac{f(\varphi^{n-1}x)-f(\varphi^{\prime(n-1)})}{(\varphi^{n-1}-\varphi^{\prime(n-1)})x}
\end{equation*}
is a $(\varphi^{n-1},\varphi^{\prime(n-1)})$-derivative.

We divide this paper as follows. Section 1 is a short introduction. Section 2 deals with the results and formulas of the $q$-calculus and the calculus on generalized Fibonacci polynomials needed to develop this paper. Section 3 introduces the $(s,t)$-composition, the deformed $(s,t)$-chain rule, and its properties are given. The final section discusses the main objective of this paper, which is to solve first-order linear proportional difference equations.

\section{Preliminaries}

\subsection{Notation for $q$-calculus}
The $q$-shifted factorial is
\begin{equation}
    (a;q)_{n}=
    \begin{cases}
    1,&n=0;\\
    \prod_{k=0}^{n-1}(1-aq^{k}),&n\geq1.
    \end{cases}
\end{equation}
Also, define 
\begin{equation}
    (a;q)_{\infty}=\lim_{n\rightarrow\infty}(a;q)_{n}=\prod_{k=0}^{\infty}(1-aq^{k}).
\end{equation}
For a complex number $a$ and $q\neq1$ the $q$-number $[a]_{q}$ is defined by
\begin{equation}
    [a]_{q}=\frac{1-q^a}{1-q}.
\end{equation}
In terms of $q$-numbers the $q$-number factorial $[n]_{q}!$ is defined for a nonnegative integer $n$ by $[n]_{q}!=\prod_{k=1}^{n}[k]_{q}$. Another expression for $[n]_{q}!$ is
\begin{equation}\label{eqn_qfac}
    [n]_{q}!=\prod_{k=0}^{n-1}\frac{1-q^{k+1}}{1-q}=\frac{(q;q)_{n}}{(1-q)^n}.
\end{equation}
The ${}_r\phi_{s}$ basic hypergeometric series is define by
\begin{equation}
    {}_r\phi_{s}\left(
    \begin{array}{c}
         a_{1},a_{2},\ldots,a_{r} \\
         b_{1},\ldots,b_{s}
    \end{array}
    ;q,z
    \right)=\sum_{n=0}^{\infty}\frac{(a_{1};q)_{n}(a_{2};q)_{n}\cdots(a_{r};q)_{n}}{(q;q)_{n}(b_{1};q)_{n}\cdots(b_{s};q)_{n}}\Big[(-1)^{n}q^{\binom{n}{2}}\Big]^{1+s-r}z^n.
\end{equation}
The $q$-binomial theorem states that
\begin{equation}
    {}_1\phi_{0}(a;q,z)=\frac{(az;q)_{\infty}}{(z;q)_{\infty}}=\sum_{n=0}^{\infty}\frac{(a;q)_{n}}{(q;q)_{n}}z^{n}.
\end{equation}
The $q$-exponential $\e_{q}(z)$ is defined by
\begin{align}
    \e_{q}(z)=\sum_{n=0}^{\infty}\frac{z^n}{[n]_{q}!}&=\sum_{n=0}^{\infty}\frac{((1-q)z)^n}{(q;q)_{n}}\nonumber\\
    &={}_1\phi_{0}\left(\begin{array}{c}
         0\\
         - 
    \end{array};q,-(1-q)z\right)=\frac{1}{((1-q)z;q)_{\infty}}.
\end{align}
Another $q$-analogue of the classical exponential function is
\begin{align}
    \EE_{q}(z)=\sum_{n=0}^{\infty}q^{\binom{n}{2}}\frac{z^n}{[n]_{q}!}&=\sum_{n=0}^{\infty}q^{\binom{n}{2}}\frac{((1-q)z)^n}{(q;q)_{n}}\nonumber\\
    &={}_1\phi_{1}\left(\begin{array}{c}
         0\\
         0 
    \end{array};q,-(1-q)z\right)=(-(1-q)z;q)_{\infty}
\end{align}

\subsection{Generalized Fibonacci polynomials}

The generalized Fibonacci polynomials depending on the variables $s,t$ are defined by
\begin{align*}
    \brk[c]{0}_{s,t}&=0,\\
    \brk[c]{1}_{s,t}&=1,\\
    \brk[c]{n+2}_{s,t}&=s\{n+1\}_{s,t}+t\{n\}_{s,t}.
\end{align*}
As special cases, we obtain the sequences of Pell, Jacobsthal, Chebysheff of the second kind, Mersenne, and Repunits, among others. When $s=P$ and $t=-Q$, with $P$ and $Q$ integers, we obtain the $(P,-Q)$-Lucas differential calculus and if $Q=-1$, we obtain the $P$-Fibonacci differential calculus.
The $(s,t)$-Fibonacci constant is the ratio toward which adjacent $(s,t)$-Fibonacci polynomials tend. This is the only positive root of $x^{2}-sx-t=0$. We will let $\varphi_{s,t}$ denote this constant, where
\begin{equation*}
    \varphi_{s,t}=\frac{s+\sqrt{s^{2}+4t}}{2}
\end{equation*}
and
\begin{equation*}
    \varphi_{s,t}^{\prime}=s-\varphi_{s,t}=-\frac{t}{\varphi_{s,t}}=\frac{s-\sqrt{s^{2}+4t}}{2}.
\end{equation*}
The $(s,t)$-Fibonomial coefficients are define by
\begin{equation*}
    \fibonomial{n}{k}_{s,t}=\frac{\brk[c]{n}_{s,t}!}{\brk[c]{k}_{s,t}!\brk[c]{n-k}_{s,t}!},
\end{equation*}
where $\brk[c]{n}_{s,t}!=\brk[c]{1}_{s,t}\brk[c]{2}_{s,t}\cdots\brk[c]{n}_{s,t}$ is the $(s,t)$-factorial or generalized fibotorial. From Eq.(\ref{eqn_qfac}),
\begin{align*}
    \brk[c]{n}_{s,t}!&=\varphi_{s,t}^{\binom{n}{2}}\frac{(q;q)_{n}}{(1-q)^n}.
\end{align*}

\subsection{Calculus on generalized Fibonacci polynomials}

For all $u\in\C$ and for all $s,t\in\R$ such that $s\neq0$, $t\neq0$, define the $(s,t)$-derivative $\mathbf{D}_{s,t}$ of a function $f(x)$ as
\begin{equation*}
(\mathbf{D}_{s,t}f)(x)=\frac{f(\varphi_{s,t}x)-f(\varphi_{s,t}^{\prime}x)}{(\varphi_{s,t}-\varphi_{s,t}^{\prime})x},
\end{equation*}
for all $x\neq0$ and $(\mathbf{D}_{s,t}f)(0)=f^{\prime}(0)$, provided $f^{\prime}(0)$ exists. The equivalence that exists between the $q$-derivative and the $(s,t)$-derivative is given by the following identity:
\begin{equation*}
    (\mathbf{D}_{s,t}f)(x)=(D_{q}f)(\varphi_{s,t}x),
\end{equation*}
where 
\begin{equation*}
    D_{q}f(x)=\frac{f(x)-f(qx)}{(1-q)x}
\end{equation*}
and $q=\varphi_{s,t}^{\prime}/\varphi_{s,t}$. We will say that a function $f(x)$ is $q$-periodic if $f(y)=f(qy)$, with $y=\varphi_{s,t}x$. Let $\Per_{s,t}$ denote the set of  $q$-periodic functions. The $q$-periodic functions form the kernel of the operator $\mathbf{D}_{s,t}$. Set $s\neq0$ and $t<0$. From the condition of $q$-periodicity of $f(x)$ it follows that $f(y)=f(qy)$, with $q>0$. Then
\begin{align*}
    f(q^y)&=f(q^{y+1})\\
    G(y)&=G(y+1)
\end{align*}
where $G$ is an arbitrary periodic function with period one and $f(x)=G(\log_{q}(x))$, $x>0$. If $t>0$, then $q<0$ and $f(x)=G(\log(x)/(\log(-q)+i\pi))$, so that $x\in\C/\{0\}$ and $f(x)\in\Per_{s,t}$.

Let $f$ be an arbitrary function. In \cite{nji} the following $(p,q)$-integrals of $f$ are defined
\begin{align*}
    \int_{0}^{a} f(x)d_{p,q}x&=(p-q)a\sum_{k=0}^{\infty}\frac{q^{k}}{p^{k+1}}f\left(\frac{q^{k}}{p^{k+1}}a\right)\text{ if  }\ \Bigg\vert\frac{p}{q}\Bigg\vert>1,\\
    \int_{0}^{a}f(x)d_{p,q}x&=(q-p)a\sum_{k=0}^{\infty}\frac{p^{k}}{q^{k+1}}f\left(\frac{p^{k}}{q^{k+1}}a\right)\text{ if  }\ \Bigg\vert\frac{p}{q}\Bigg\vert<1
\end{align*}

\begin{align*}
    \int_{a}^{b}f(x)d_{p,q}x&=\int_{0}^{b}f(x)d_{p,q}x-\int_{0}^{a}f(x)d_{p,q}x
\end{align*}

For $\alpha\in\R$, the set $\alpha I\subset\R$ is defined by $\alpha I=\{\alpha x\ :\ x\in I\}$. Set $\vert q\vert$ such that $0<\vert q\vert<1$. We will say that $I$ is a $(q,\varphi)$-geometric set if it contains all geometric sequences $\{xq^n/\varphi_{s,t}\}$, $n\in\N_{0}=\N\cup\{0\}$, $x\in I$. 
For a pair of real numbers $a$ and $b$, we define
\begin{equation*}
    [a,b]_{q}=\Bigg\{a\frac{q^n}{\varphi_{s,t}}\ :\ n\in\N_{0}\Bigg\}\cup\Bigg\{b\frac{q^n}{\varphi_{s,t}}\ :\ n\in\N_{0}\Bigg\}
\end{equation*}
which we will call a $(q,\varphi)$-interval with extreme points $a$ and $b$. For real numbers $a$ and $b$, the following property holds:
\begin{equation*}
    \text{If }a,b\in I\ \Longrightarrow\ [a,b]_{q}\subset I.
\end{equation*}
Consider the function $f:[a,b]_{q}\rightarrow\K$, where $\K$ is either $\R$ or $\C$. The $(s,t)$-integral in $[a,b]_{q}$ of $f$ is 
\begin{equation}\label{eqn_int_def}
    \int_{a}^{b}f(x)d_{s,t}x=(1-q)\sum_{n=0}^{\infty}\Bigg[bf\left(b\frac{q^n}{\varphi_{s,t}}\right)-af\left(a\frac{q^n}{\varphi_{s,t}}\right)\Bigg]q^n=\int_{a}^{b}f(x/\varphi_{s,t})d_{q}x.
\end{equation}
We will call $f$ a function $(s,t)$-integrable if the series in Eq.(\ref{eqn_int_def}) is convergent. 

For any real number $p\geq1$ and $a,b\in I$, we will denote by $\mathcal{L}_{s,t}^{p}[a,b]$ the set of functions $f:[a,b]_{q}\rightarrow\K$ such that $\vert f\vert^{p}$ is $(s,t)$-integrable in $[a,b]_{q}$, i.e.,
\begin{equation*}
    \mathcal{L}_{s,t}^{p}[a,b]=\Bigg\{f:[a,b]_{q}\rightarrow\K\ :\ \int_{[a,b]_{q}}\vert f\vert^pd_{s,t}<\infty\Bigg\}.
\end{equation*}
We also set
\begin{equation*}
    \mathcal{L}_{s,t}^{\infty}[a,b]=\Bigg\{f:[a,b]_{q}\rightarrow\K\ :\ \sup_{n\in\N_{0}}\Bigg\{\Bigg\vert f\left(\frac{aq^n}{\varphi_{s,t}}\right)\Bigg\vert,\Bigg\vert f\left(\frac{bq^n}{\varphi_{s,t}}\right)\Bigg\vert\Bigg\}<\infty\Bigg\}
\end{equation*}

{\bf The fundamental theorem of $(s,t)$-calculus}.
Let $0<\vert q\vert<1$, and $I=[a,b]_{q}$ a $(q,\varphi)$-interval. Fix $a,b\in I$, and let $f:I\rightarrow\K$ be a function such that $\mathbf{D}_{s,t}f\in\mathcal{L}_{s,t}^{1}[a,b]$. Then:
if $f$ is continuous at $0$, then
    \begin{equation}\label{theo_funda}
        \int_{a}^{b}(\mathbf{D}_{s,t}f)(x)d_{s,t}x=f(b)-f(a).
    \end{equation}

{\bf The $(s,t)$-integration by parts formula}. Assume $0<\vert q\vert<1$, and let $I=[a,b]_{q}$ be a $(q,\varphi)$-interval. Fix $a,b\in I$, and let $f:I\rightarrow\K$ be a function such that $\mathbf{D}_{s,t}f\in\mathcal{L}_{s,t}^{1}[a,b]$. Then the equality
\begin{equation}\label{theo_partes2}
    \int_{I}(\mathbf{D}_{s,t}f)(x)g(\varphi_{s,t}^{\prime}x)d_{s,t}x=\Big[f\cdot g\Big]_{a}^{b}
    -\int_{I}f(\varphi_{s,t}x)(\mathbf{D}_{s,t}g)(x)d_{s,t}x
\end{equation}
holds, provided $f,g\in\mathcal{L}_{s,t}^{1}[a,b]$, $(\mathbf{D}_{s,t}f)(x)$ and $(\mathbf{D}_{s,t}g)(x)$ are bounded for all $x\in I$, and the limits exists. If, in addition, $f$ and $g$ are continuous at $0$, then

\subsection{Deformed $(s,t)$-exponential function}

Set $s\neq0$, $t\neq0$, $s,t\in\R$. For all $u\in\C$, the deformed $(s,t)$-exponential function 
\begin{equation*}
    \exp_{s,t}(z,u)=
    \begin{cases}
        \sum_{n=0}^{\infty}u^{\binom{n}{2}}\frac{z^{n}}{\brk[c]{n}_{s,t}!}&\text{ if }u\neq0;\\
        1+z&\text{ if }u=0,
    \end{cases}
\end{equation*}
is solution of the equation $\mathbf{D}_{s,t}y=y(ux)$. The $(s,t)$-antiderivative of $\exp_{s,t}(x,u)$ is
\begin{equation}
    \int\exp_{s,t}(x,u)d_{s,t}x=u\exp_{s,t}(x/u,u).
\end{equation}
Also, we define
\begin{align*}
    \Exp_{s,t}(x)&=\exp_{s,t}(x,\varphi_{s,t})\\
    \Exp_{s,t}^{\prime}(x)&=\exp_{s,t}(x,\varphi_{s,t}^\prime)
\end{align*}
and
\begin{equation}
    \exp_{s,t}((\alpha\oplus\beta)_{\varphi,\varphi^\prime}x)=\sum_{n=0}^{\infty}(\alpha\oplus\beta)_{\varphi,\varphi^\prime}^{n}\frac{x^n}{\brk[c]{n}_{s,t}!}=\Exp_{s,t}(x)\Exp_{s,t}^{\prime}(x)
\end{equation}
where
\begin{equation}\label{eqn_binom}
    (\alpha\oplus\beta)_{\varphi,\varphi^\prime}^n=
    \begin{cases}
        \prod_{k=0}^{n-1}(\alpha\varphi_{s,t}^k+\beta\varphi_{s,t}^{\prime k}),&\text{ if }n\geq1;\\
        1,&\text{ if }n=0.
    \end{cases}
\end{equation}

\begin{proposition}\label{prop_exp_prod}
For all $\alpha,\beta\in\C$. Then
    \begin{enumerate}
        \item $\mathbf{D}_{s,t}\exp_{s,t}((\alpha\oplus\beta)_{\varphi,\varphi^\prime}x)=\alpha\exp_{s,t}((\alpha\oplus\beta)_{\varphi,\varphi^\prime}\varphi_{s,t}x)+\beta\exp_{s,t}((\alpha\oplus\beta)_{\varphi,\varphi^\prime}\varphi_{s,t}^{\prime}x)$.
        \item $\int\exp_{s,t}((\alpha\oplus\beta)_{\varphi,\varphi^\prime}x)d_{s,t}x=\frac{\varphi_{s,t}\varphi_{s,t}^\prime}{\alpha\varphi_{s,t}^\prime+\beta\varphi_{s,t}}\exp_{s,t}(((\alpha/\varphi_{s,t})\oplus(\beta/\varphi_{s,t}^\prime))_{\varphi,\varphi^\prime}x)$
    \end{enumerate}
\end{proposition}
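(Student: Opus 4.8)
The plan is to work directly with the defining power series and reduce both claims to the index-shift recurrence satisfied by the generalized binomial powers $(\alpha\oplus\beta)_{\varphi,\varphi^\prime}^{n}$.

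First I would record how $\mathbf{D}_{s,t}$ acts on monomials. Because $\varphi_{s,t}$ and $\varphi_{s,t}^\prime$ are the roots of $x^{2}-sx-t=0$, the Binet-type identity $\brk[c]{n}_{s,t}=(\varphi_{s,t}^{n}-\varphi_{s,t}^{\prime n})/(\varphi_{s,t}-\varphi_{s,t}^\prime)$ holds, so substituting $f(x)=x^{n}$ into the definition of the $(s,t)$-derivative yields $\mathbf{D}_{s,t}x^{n}=\brk[c]{n}_{s,t}x^{n-1}$. Applying this termwise to $\exp_{s,t}((\alpha\oplus\beta)_{\varphi,\varphi^\prime}x)$ and cancelling $\brk[c]{n}_{s,t}$ against the fibotorial, after shifting the summation index, collapses the left-hand side of (1) to $\sum_{n\geq0}(\alpha\oplus\beta)_{\varphi,\varphi^\prime}^{n+1}x^{n}/\brk[c]{n}_{s,t}!$. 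The crux of part (1) is then the factorization $(\alpha\oplus\beta)_{\varphi,\varphi^\prime}^{n+1}=(\alpha\varphi_{s,t}^{n}+\beta\varphi_{s,t}^{\prime n})\,(\alpha\oplus\beta)_{\varphi,\varphi^\prime}^{n}$, immediate from (\ref{eqn_binom}) by peeling off the top factor $k=n$. On the other hand, pulling the scalings $\varphi_{s,t}x$ and $\varphi_{s,t}^\prime x$ through the series shows the right-hand side of (1) equals $\sum_{n\geq0}(\alpha\oplus\beta)_{\varphi,\varphi^\prime}^{n}(\alpha\varphi_{s,t}^{n}+\beta\varphi_{s,t}^{\prime n})x^{n}/\brk[c]{n}_{s,t}!$, so the two sides agree coefficientwise.

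For part (2) I would verify that the proposed antiderivative $F(x)$ satisfies $\mathbf{D}_{s,t}F=\exp_{s,t}((\alpha\oplus\beta)_{\varphi,\varphi^\prime}x)$. Writing $\tilde\alpha=\alpha/\varphi_{s,t}$, $\tilde\beta=\beta/\varphi_{s,t}^\prime$ and differentiating termwise as above, the key is the companion identity $(\tilde\alpha\oplus\tilde\beta)_{\varphi,\varphi^\prime}^{n+1}=(\alpha/\varphi_{s,t}+\beta/\varphi_{s,t}^\prime)\,(\alpha\oplus\beta)_{\varphi,\varphi^\prime}^{n}$. This follows by substituting the scaled arguments into (\ref{eqn_binom}) and re-indexing $\prod_{k=0}^{n}(\tilde\alpha\varphi_{s,t}^{k}+\tilde\beta\varphi_{s,t}^{\prime k})=\prod_{j=-1}^{n-1}(\alpha\varphi_{s,t}^{j}+\beta\varphi_{s,t}^{\prime j})$, the extra factor at $j=-1$ being exactly $\alpha/\varphi_{s,t}+\beta/\varphi_{s,t}^\prime=(\alpha\varphi_{s,t}^\prime+\beta\varphi_{s,t})/(\varphi_{s,t}\varphi_{s,t}^\prime)$. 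Multiplying by the stated prefactor $\varphi_{s,t}\varphi_{s,t}^\prime/(\alpha\varphi_{s,t}^\prime+\beta\varphi_{s,t})$ cancels this factor, leaving $\sum_{n\geq0}(\alpha\oplus\beta)_{\varphi,\varphi^\prime}^{n}x^{n}/\brk[c]{n}_{s,t}!=\exp_{s,t}((\alpha\oplus\beta)_{\varphi,\varphi^\prime}x)$, as required.

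The computations are essentially bookkeeping; the only point requiring care is the termwise action of $\mathbf{D}_{s,t}$, which is legitimate since $\mathbf{D}_{s,t}$ is the finite two-point difference quotient and thus evaluates on a convergent power series coefficient by coefficient via $\mathbf{D}_{s,t}x^{n}=\brk[c]{n}_{s,t}x^{n-1}$. I expect the main obstacle, such as it is, to be tracking the index shift in part (2) so that the boundary factor at $j=-1$ is correctly identified with the reciprocal of the given prefactor.
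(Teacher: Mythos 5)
Your proof is correct. Part (1) follows the paper essentially verbatim: termwise differentiation via $\mathbf{D}_{s,t}x^{n}=\brk[c]{n}_{s,t}x^{n-1}$, an index shift, and the peeling identity $(\alpha\oplus\beta)_{\varphi,\varphi^\prime}^{n+1}=(\alpha\varphi_{s,t}^{n}+\beta\varphi_{s,t}^{\prime n})(\alpha\oplus\beta)_{\varphi,\varphi^\prime}^{n}$. Part (2), however, takes a genuinely different route. The paper \emph{derives} the antiderivative: it factors $\exp_{s,t}((\alpha\oplus\beta)_{\varphi,\varphi^\prime}x)=\Exp_{s,t}(\alpha x)\Exp_{s,t}^{\prime}(\beta x)$, applies the $(s,t)$-integration by parts formula (\ref{theo_partes2}), obtains a linear relation in which the unknown integral reappears on the right, and solves for it --- picking up the prefactor $\varphi_{s,t}\varphi_{s,t}^{\prime}/(\alpha\varphi_{s,t}^{\prime}+\beta\varphi_{s,t})$ from the coefficient $1+\varphi_{s,t}\beta/(\varphi_{s,t}^{\prime}\alpha)$. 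You instead \emph{verify} the stated antiderivative by differentiating it termwise and using the companion re-indexing identity $((\alpha/\varphi_{s,t})\oplus(\beta/\varphi_{s,t}^{\prime}))_{\varphi,\varphi^\prime}^{n+1}=\frac{\alpha\varphi_{s,t}^{\prime}+\beta\varphi_{s,t}}{\varphi_{s,t}\varphi_{s,t}^{\prime}}(\alpha\oplus\beta)_{\varphi,\varphi^\prime}^{n}$, which you correctly isolate as the boundary factor at $j=-1$. Your version is more elementary and self-contained: it stays entirely at the level of coefficients, needs only $t\neq0$ (so that $\varphi_{s,t},\varphi_{s,t}^{\prime}\neq0$) and $\alpha\varphi_{s,t}^{\prime}+\beta\varphi_{s,t}\neq0$, and avoids both the integrability/boundedness hypotheses attached to (\ref{theo_partes2}) and the paper's implicit division by $\alpha$. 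What it gives up is discovery: the paper's computation produces the formula, whereas yours presupposes it --- a fair trade given that the formula is stated in the proposition.
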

\begin{proof}
From Eq.(\ref{eqn_binom}),
\begin{align*}
    \mathbf{D}_{s,t}\exp_{s,t}((\alpha\oplus\beta)_{\varphi,\varphi^{\prime}}x)&=\mathbf{D}_{s,t}\left(\sum_{n=0}^{\infty}(\alpha\oplus\beta)_{\varphi,\varphi^\prime}^{n}\frac{x^n}{\brk[c]{n}_{s,t}!}\right)\\
    &=\sum_{n=0}^{\infty}(\alpha\oplus\beta)_{\varphi,\varphi^\prime}^{n+1}\frac{x^n}{\brk[c]{n}_{s,t}!}\\
    &=\alpha\sum_{n=0}^{\infty}(\alpha\oplus\beta)^{n}_{\varphi,\varphi^\prime}\frac{(\varphi_{s,t}x)^n}{\brk[c]{n}_{s,t}!}+\beta\sum_{n=0}^{\infty}(\alpha\oplus\beta)_{\varphi,\varphi^\prime}^{n}\frac{(\varphi_{s,t}^{\prime}x)^n}{\brk[c]{n}_{s,t}!}\\
    &=\alpha\exp_{s,t}((\alpha\oplus\beta)_{\varphi,\varphi^\prime}\varphi_{s,t}x)+\beta\exp_{s,t}((\alpha\oplus\beta)_{\varphi,\varphi^\prime}\varphi_{s,t}^{\prime}x).
\end{align*}
From $(s,t)$-integration by parts formula Eq.(\ref{theo_partes2}),
\begin{align*}
    \int\exp_{s,t}((\alpha\oplus\beta)_{\varphi,\varphi^{\prime}}x)d_{s,t}x&=\int\Exp_{s,t}(\alpha x)\Exp_{s,t}^{\prime}(\beta x)d_{s,t}x\\
    &=\frac{\varphi_{s,t}}{\alpha}\int\Exp_{s,t}^{\prime}(\beta(\varphi_{s,t}^{\prime}x)/\varphi_{s,t})\mathbf{D}_{s,t}\Exp_{s,t}(\alpha x/\varphi_{s,t})d_{s,t}x\\
    &=\frac{\varphi_{s,t}}{\alpha}\Big[\Exp_{s,t}(\beta x/\varphi_{s,t}^\prime)\Exp_{s,t}(\alpha x/\varphi_{s,t})\\
    &\hspace{2cm}-\frac{\beta}{\varphi_{s,t}^{\prime}}\int\Exp_{s,t}(\alpha x)\Exp_{s,t}^{\prime}(\beta x)d_{s,t}x\Big].
\end{align*} 
Then
\begin{align*}
    \left(1+\frac{\varphi_{s,t}\beta}{\varphi_{s,t}^{\prime}\alpha}\right)\int\Exp_{s,t}(\alpha x)\Exp_{s,t}^{\prime}(\beta x)d_{s,t}x=\frac{\varphi_{s,t}}{\alpha}\Exp_{s,t}^{\prime}(\beta x/\varphi_{s,t}^\prime)\Exp_{s,t}(\alpha x/\varphi_{s,t})
\end{align*}
and
\begin{align*}
    \int\Exp_{s,t}(\alpha x)\Exp_{s,t}^{\prime}(\beta x)d_{s,t}x&=\frac{\varphi_{s,t}\varphi_{s,t}^{\prime}}{\alpha\varphi_{s,t}^\prime+\beta\varphi_{s,t}}\Exp_{s,t}(\alpha x/\varphi_{s,t})\Exp_{s,t}^{\prime}(\beta x/\varphi_{s,t}^\prime)\\
    &=\frac{\varphi_{s,t}\varphi_{s,t}^{\prime}}{\alpha\varphi_{s,t}^\prime+\beta\varphi_{s,t}}\exp_{s,t}(((\alpha/\varphi_{s,t})\oplus(\beta/\varphi_{s,t}^\prime))_{\varphi,\varphi^\prime}x).
\end{align*}
The proof is completed.
\end{proof}

\subsection{The $(s,t)$-pantograph function}

The $(1,u)$-deformed $(s,t)$-pantograph function is defined by
    \begin{equation}\label{eqn_panto}
        \EE_{s,t}(a,b;z,u       
        )=\sum_{n=0}^{\infty}(a\oplus b)_{1,u}^n\frac{z^n}{\brk[c]{n}_{s,t}!},
    \end{equation}
and this is solution of the equation $\mathbf{D}_{s,t}y=ay(x)+by(ux)$, where
\begin{equation}
    (a\oplus b)_{1,u}^{n}=
    \begin{cases}
        \prod_{k=0}^{n-1}(a+bu^k),&\text{ if }n\neq0;\\
        1,&\text{ if }n=0.
    \end{cases}
\end{equation}
For $a\neq0$, it is very easy to notice that $(a\oplus b)_{1,u}^{n}=a^{\binom{n}{2}}(b/a;u)_{n}$. Then
\begin{equation}
    \EE_{s,t}(a,b;x,u)=\sum_{n=0}^{\infty}\left(\frac{a}{\varphi_{s,t}}\right)^{\binom{n}{2}}\frac{(b/a;u)_{n}}{(q;q)_{n}}((1-q)x)^n
\end{equation}
is a $q$-series. Some special values of the function $\EE_{s,t}(a,b,u;x)$ are
\begin{align*}
    \EE_{s,t}(a,-a;x,u)&=0,\\
    \EE_{s,t}(a,0;x,-)&=\exp_{s,t}(ax),\\
    \EE_{s,t}(0,a;x,u)&=\exp_{s,t}(ax,u),\\
    \EE_{s,t}(a,b;x,1)&=\exp_{s,t}((a+b)x),\\
    \EE_{s,t}(ac,bc;x,u)&=\EE_{s,t}(a,b;cx,u),\\
    \EE_{s,t}(\varphi_{s,t},-b;-x,q)&={}_1\phi_{0}\left(\begin{array}{c}
      b/\varphi_{s,t}\\
      -   
    \end{array};q,(1-q)x\right)=\frac{((b/\varphi_{s,t})(1-q)x;q)_{\infty}}{((1-q)x;q)_{\infty}},\\
    \EE_{s,t}(1,-q;x,q)&=\Theta_{0}((1-q)x,\varphi_{s,t}^{-1}),
\end{align*}
where $q=\varphi_{s,t}^{\prime}/\varphi_{s,t}$ and
\begin{equation}
    \Theta_{0}(x,y)=\sum_{n=0}^{\infty}y^{\binom{n}{2}}x^{n}
\end{equation}
is the Partial Theta function [?]. From here, is defined the function
\begin{equation*}
    \psi(q)=\Theta_{0}(q,q)=\sum_{n=0}^{\infty}q^{\binom{n+1}{2}}=\frac{(q^2;q^2)_{\infty}}{(q;q^2)_{\infty}}.
\end{equation*}
Also,
\begin{equation}
    \EE_{s,t}(\varphi_{s,t}^{\prime},-b;-x,q)=\sum_{n=0}^{\infty}(-1)^{n}q^{\binom{n}{2}}\frac{(b/\varphi_{s,t}^\prime;q)_{n}}{(q;q)_{n}}(1-q)^{n}x^n,
\end{equation}
and
\begin{equation}
    \EE_{s,t}(\varphi_{s,t}^{\prime},-\varphi_{s,t}^{\prime}q;-x,q)=\sum_{n=0}^{\infty}(-1)^{n}q^{\binom{n}{2}}(1-q)^{n}x^n,
\end{equation}

\begin{theorem}
Set $a,b,u\in\C$ such that $a\neq0$ and $\vert b/au\vert<1$. Then
\begin{equation}
    \int \EE_{s,t}(a,b;x,u)d_{s,t}x=\frac{1}{a}\sum_{k=0}^{\infty}(-1)^{k}\left(\frac{b}{au}\right)^{k}\EE_{s,t}(a,b;u^{k}x,u).
\end{equation}
\end{theorem}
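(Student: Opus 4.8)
The plan is to turn the statement into a single functional equation for the $(s,t)$-antiderivative and then solve that equation by iteration. Write $F(x)=\EE_{s,t}(a,b;x,u)$ and recall that $F$ solves $\mathbf{D}_{s,t}F(x)=aF(x)+bF(ux)$ (Eq.(\ref{eqn_panto_st})). Since $a\neq0$, I can isolate $F$,
\[
F(x)=\frac{1}{a}\bigl(\mathbf{D}_{s,t}F(x)-bF(ux)\bigr),
\]
and apply the $(s,t)$-antiderivative to both sides. By the fundamental theorem of $(s,t)$-calculus (Eq.(\ref{theo_funda})) we have $\int\mathbf{D}_{s,t}F\,d_{s,t}x=F$ up to a constant, so the only remaining ingredient is a rule describing how the antiderivative behaves under the dilation $x\mapsto ux$.

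First I would establish that scaling rule. A one-line computation from the definition of $\mathbf{D}_{s,t}$ gives $\mathbf{D}_{s,t}\bigl[h(ux)\bigr]=u(\mathbf{D}_{s,t}h)(ux)$; consequently, if $G$ denotes an antiderivative of $F$ (so $\mathbf{D}_{s,t}G=F$), then $\tfrac{1}{u}G(ux)$ is an antiderivative of $x\mapsto F(ux)$, i.e. $\int F(ux)\,d_{s,t}x=\tfrac{1}{u}G(ux)$. Substituting this into the antiderivative of the displayed identity turns it into the functional equation
\[
G(x)=\frac{1}{a}F(x)-\frac{b}{au}\,G(ux).
\]

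Next I would iterate this relation $N$ times to obtain
\[
G(x)=\frac{1}{a}\sum_{k=0}^{N-1}(-1)^{k}\Bigl(\frac{b}{au}\Bigr)^{k}F(u^{k}x)+(-1)^{N}\Bigl(\frac{b}{au}\Bigr)^{N}G(u^{N}x),
\]
and let $N\to\infty$. Because $F(u^{k}x)=\EE_{s,t}(a,b;u^{k}x,u)$, the finite sum is precisely the asserted right-hand side, so everything reduces to showing that the remainder tends to $0$. This is exactly where the hypothesis $\lvert b/au\rvert<1$ is used: the factor $(b/au)^{N}$ decays geometrically, while $G(u^{N}x)$ stays bounded because the antiderivative is continuous at $0$ and $u^{N}x\to0$. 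Controlling this remainder — equivalently, justifying the interchange of the two infinite summations — is the main obstacle, and I expect the regime $\lvert u\rvert\le1$ to be the clean case in which $u^{N}x\to0$.

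As an independent check that also pins down the role of the hypothesis, I would substitute the defining series Eq.(\ref{eqn_panto}) for each $\EE_{s,t}(a,b;u^{k}x,u)$ into the right-hand side and interchange the $k$- and $n$-summations. The inner sum over $k$ is a geometric series, $\sum_{k\ge0}(-bu^{n-1}/a)^{k}=a/(a+bu^{n-1})$, whose convergence at $n=0$ is exactly the condition $\lvert b/au\rvert<1$. Using the telescoping identity $(a\oplus b)_{1,u}^{n}/(a+bu^{n-1})=(a\oplus b)_{1,u}^{n-1}$ (immediate from the product $(a\oplus b)_{1,u}^{n}=\prod_{k=0}^{n-1}(a+bu^{k})$), the right-hand side collapses to $\sum_{m\ge0}(a\oplus b)_{1,u}^{m}\,x^{m+1}/\brk[c]{m+1}_{s,t}!$, which is $\int\EE_{s,t}(a,b;x,u)\,d_{s,t}x$ up to the additive constant contributed by the $n=0$ term. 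This recovers the formula and confirms the computation.
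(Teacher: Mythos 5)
Your main argument takes a genuinely different route from the paper. The paper integrates the defining series term by term, uses the telescoping identity $(a\oplus b)_{1,u}^{n}=(a+bu^{n-1})(a\oplus b)_{1,u}^{n-1}$ to rewrite the antiderivative as $\sum_{n\geq1}\frac{(a\oplus b)_{1,u}^{n}}{a+bu^{n-1}}\frac{x^{n}}{\brk[c]{n}_{s,t}!}$, expands $\frac{1}{a+bu^{n-1}}$ as a geometric series, and interchanges the two sums to reassemble the shifted functions $\EE_{s,t}(a,b;u^{k}x,u)$, discarding the leftover constant $\frac{u}{au+b}$ because it lies in the kernel of $\mathbf{D}_{s,t}$. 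Your ``independent check'' is exactly this computation run from right to left, so you have reproduced the paper's proof as a verification; your primary argument --- antidifferentiating the pantograph equation $\mathbf{D}_{s,t}F=aF+bF(u\,\cdot)$ into a functional equation for the antiderivative $G$ and iterating --- is a cleaner conceptual derivation that explains where the alternating geometric weights come from. Two points to tighten. First, your relation only determines $G(x)+\frac{b}{au}G(ux)-\frac{1}{a}F(x)$ up to an element of the kernel of $\mathbf{D}_{s,t}$; with the normalization $G(0)=0$ that element is the constant $-\frac{1}{a}$, and carrying it through the iteration produces precisely the additive constant $-\frac{u}{au+b}$ that the paper discards --- harmless, since the identity is between antiderivatives, but it should be recorded so the iteration closes. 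Second, your remainder estimate needs $G(u^{N}x)$ bounded, which you rightly flag as the case $\lvert u\rvert\leq1$; the paper has the same implicit restriction, since its geometric expansion of $\frac{1}{a+bu^{n-1}}$ for every $n\geq1$ likewise fails when $\lvert u\rvert>1$, so your argument is no less rigorous than the original on this point.
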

\begin{proof}
From Eq.(\ref{eqn_panto})
\begin{align*}
    \int\EE_{s,t}(a,b;x,u)d_{s,t}x&=\sum_{n=0}^{\infty}(a\oplus b)_{1,u}^{n}\frac{x^{n+1}}{\brk[c]{n+1}_{s,t}!}\\
    &=\sum_{n=1}^{\infty}\frac{(a\oplus b)_{1,u}^n}{a+bu^{n-1}}\frac{x^n}{\brk[c]{n}_{s,t}!}\\
    &=\frac{1}{a}\sum_{n=1}^{\infty}(a\oplus b)_{1,u}^{n}\frac{x^n}{\brk[c]{n}_{s,t}!}\sum_{k=0}^{\infty}(-1)^{k}\left(\frac{b}{au}\right)^{k}u^{kn}\\
    &=\frac{1}{a}\sum_{k=0}^{\infty}(-1)^{k}\left(\frac{b}{au}\right)^{k}\sum_{n=1}^{\infty}(a\oplus b)_{1,u}^{n}\frac{(u^{k}x)^n}{\brk[c]{n}_{s,t}!}\\
    &=\frac{1}{a}\sum_{k=0}^{\infty}(-1)^{k}\left(\frac{b}{au}\right)^k(\EE_{s,t}(a,b;u^{k}x,u)-1)\\
    &=\frac{1}{a}\sum_{k=0}^{\infty}(-1)^{k}\left(\frac{b}{au}\right)^k\EE_{s,t}(a,b;u^{k}x,u)-\frac{u}{au+b}.
\end{align*}
As $\mathbf{D}_{s,t}(\frac{u}{au+b})=0$, then $\frac{1}{a}\sum_{k=0}^{\infty}(-1)^{k}\left(\frac{b}{au}\right)^k\EE_{s,t}(a,b;u^{k}x,u)$ is the $(s,t)$-antiderivative of $\EE_{s,t}(a,b;x,u)$.
\end{proof}

\begin{corollary}\label{cor_int_th}
If $\vert q\vert<1$, then
\begin{equation}
    \int\Theta_{0}((1-q))x,\varphi_{s,t}^{-1})d_{s,t}x=\sum_{k=0}^{\infty}(\Theta_{0}((1-q))q^{k}x,\varphi_{s,t}^{-1})-1).
\end{equation}
\end{corollary}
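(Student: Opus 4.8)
The plan is to specialize the preceding Theorem to $a=1$, $b=-q$, $u=q$, the parameters for which the excerpt records the special value $\EE_{s,t}(1,-q;x,q)=\Theta_{0}((1-q)x,\varphi_{s,t}^{-1})$. With these values the integrand on the left becomes $\Theta_{0}((1-q)x,\varphi_{s,t}^{-1})$, while the prefactors on the right collapse completely: $\tfrac{1}{a}=1$ and $(-1)^{k}(b/au)^{k}=(-1)^{k}(-1)^{k}=1$. Hence each summand $\EE_{s,t}(1,-q;q^{k}x,q)$ ought to become $\Theta_{0}((1-q)q^{k}x,\varphi_{s,t}^{-1})$, matching the stated right-hand side once the subtracted $1$ is carried along.

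The principal subtlety, and the main conceptual obstacle, is that the hypothesis $|b/au|<1$ of the Theorem is \emph{not} satisfied at these parameters, since $|b/au|=|{-q}/q|=1$; equivalently $au+b=q-q=0$, so the term $u/(au+b)$ occurring in the Theorem's closed form is undefined here. I would therefore not invoke the final formula but instead work from the penultimate identity of the Theorem's proof, namely $\int\EE_{s,t}(a,b;x,u)\,d_{s,t}x=\tfrac{1}{a}\sum_{k\geq0}(-1)^{k}(b/au)^{k}(\EE_{s,t}(a,b;u^{k}x,u)-1)$, in which the divergent constant has not yet been split off. Substituting the boundary parameters into this form is legitimate and produces exactly $\sum_{k\geq0}(\Theta_{0}((1-q)q^{k}x,\varphi_{s,t}^{-1})-1)$.

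To make the argument self-contained I would re-run the relevant steps directly at the boundary values rather than quoting them. Since $(1\oplus(-q))_{1,q}^{n}=\prod_{k=0}^{n-1}(1-q^{k+1})=(q;q)_{n}$, the $(s,t)$-antiderivative is $\sum_{n\geq1}(q;q)_{n-1}x^{n}/\brk[c]{n}_{s,t}!=\sum_{n\geq1}\tfrac{(q;q)_{n}}{1-q^{n}}\tfrac{x^{n}}{\brk[c]{n}_{s,t}!}$; expanding $(1-q^{n})^{-1}=\sum_{k\geq0}q^{nk}$ (licit for every $n\geq1$ because $|q|<1$ forces $|q^{n}|<1$) and interchanging the two sums yields $\sum_{k\geq0}(\EE_{s,t}(1,-q;q^{k}x,q)-1)$, where the inner sum runs from $n=1$ precisely because the constant term has been subtracted. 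The one step genuinely requiring care is this interchange, which I expect to be the main technical obstacle. I would settle it by absolute convergence, bounding $\sum_{n\geq1}\sum_{k\geq0}|(q;q)_{n}|\,|x|^{n}|q|^{nk}/|\brk[c]{n}_{s,t}!|$ by $\tfrac{1}{1-|q|}\sum_{n\geq1}|(q;q)_{n}|\,|x|^{n}/|\brk[c]{n}_{s,t}!|$ using $\tfrac{1}{1-|q|^{n}}\leq\tfrac{1}{1-|q|}$, and observing that the last series is dominated by the everywhere-convergent defining series of the pantograph function. Fubini's theorem then validates the rearrangement, and rewriting each term via $\EE_{s,t}(1,-q;q^{k}x,q)=\Theta_{0}((1-q)q^{k}x,\varphi_{s,t}^{-1})$ completes the proof.
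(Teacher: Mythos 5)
Your proposal is correct, and it follows the route the paper clearly intends (the corollary is stated immediately after the theorem with no separate proof, so the intended argument is specialization to $a=1$, $b=-q$, $u=q$ via the identity $\EE_{s,t}(1,-q;x,q)=\Theta_{0}((1-q)x,\varphi_{s,t}^{-1})$). Where you go beyond the paper is in noticing that this specialization sits exactly on the boundary of the theorem's hypothesis: $\vert b/au\vert=\vert{-q}/q\vert=1$ and $au+b=0$, so the theorem's final closed form (which splits off the constant $u/(au+b)$) is literally inapplicable, and the corollary can only be read off from the penultimate line of the theorem's proof, $\frac{1}{a}\sum_{k\geq0}(-1)^{k}(b/au)^{k}\bigl(\EE_{s,t}(a,b;u^{k}x,u)-1\bigr)$, where the $-1$ keeps each summand small. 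The paper glosses over this; your rerun of the computation at the boundary values --- using $(1\oplus(-q))_{1,q}^{n}=(q;q)_{n}$, the expansion $(1-q^{n})^{-1}=\sum_{k\geq0}q^{nk}$ valid termwise for every $n\geq1$ since $\vert q\vert<1$, and the Fubini justification via the bound $(1-\vert q\vert^{n})^{-1}\leq(1-\vert q\vert)^{-1}$ --- is exactly the right repair and is more careful than the source. The only point worth flagging is that your absolute-convergence bound tacitly uses that $\sum_{n}\vert(q;q)_{n}\vert\,\vert x\vert^{n}/\vert\brk[c]{n}_{s,t}!\vert$ converges, i.e. that the partial theta series with base $\vert\varphi_{s,t}\vert^{-1}$ is entire; this holds under the paper's standing assumption that $\vert\varphi_{s,t}\vert>1$, but you should say so explicitly since $\vert q\vert<1$ alone does not force it.
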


\section{Deformed $(s,t)$-chain rule}

This section is based on Gessel's paper \cite{gessel}, slightly modified by Johnson \cite{john}, on $q$-function composition and $q$-chain rule. We will define the deformed $(s,t)$-composition of functions and give the $(s,t)$-chain rule.

Consider a function $f(x)$ of the form
\begin{equation*}
    f(x)=\sum_{n=1}^{\infty}f_{n}\frac{x^n}{\brk[s]{n}_{q}!}.
\end{equation*}
Then for $k\geq0$, the $k$th symbolic power $f^{[k]}(x)$ of $f$ is defined by $f^{[0]}(x)=1$ and for $k\geq1$ define the $k$th symbolic power of $f$ inductively by
\begin{equation*}
    D_{q}f^{[k]}(x)=\brk[s]{k}_{q}f^{[k-1]}(x)D_{q}f(x),\ \text{ with $f^{[k]}(0)=0$}.
\end{equation*}
When $k=1$, $\mathbf{D}_{q}f^{[1]}(x)=\mathbf{D}_{q}f(x)$. Then $f^{[1]}(x)=f(x)$ because their derivatives coincide and because both are zero at $x=0$. Note also that $\mathbf{D}_{q}x^{[k]}=\brk[s]{k}_{q}x^{[k-1]}$ and by induction it is shown that $x^{[k]}=x^{k}$, because $x^{[1]}=x$. 
If $r$ does not depend on $x$, then
\begin{equation}\label{eqn_comp_cons}
    (tf(x))^{[k]}=t^{k}f^{[k]}(x).
\end{equation}
For exchange $D_{q}$ by $\mathbf{D}_{s,t}$ in the definition of Gessel of $q$-chain rule, we obtain the $(s,t)$-chain rule of $f^{[n]}(x)$
\begin{equation}\label{eqn_der_pow}
    \mathbf{D}_{s,t}f^{[n]}(x)=\brk[c]{n}_{s,t}f^{[n-1]}(x)\mathbf{D}_{s,t}f(x),\hspace{0.5cm}\text{with }f^{[k]}(0)=0
\end{equation}
when $f(x)$ is defined by
\begin{equation*}
    f(x)=\sum_{n=1}^{\infty}f_{n}\frac{x^n}{\brk[c]{n}_{s,t}!}.
\end{equation*}
Next, we will define the deformed $(s,t)$-composition of functions.

\begin{definition}
Suppose a function $g_{s,t}(x,u)$ defined by
\begin{equation*}
    g_{s,t}(x,u)=\sum_{n=0}^{\infty}u^{\binom{n}{2}}g_{n}\frac{x^n}{\brk[c]{n}_{s,t}!}
\end{equation*}
Then the $u$-deformed $(s,t)$-composition $g_{s,t}[f,u]$ is defined to be
\begin{equation}\label{eqn_comp}
    g_{s,t}(x,u)\square f(x)=g_{s,t}[f,u]=\sum_{n=0}^{\infty}u^{\binom{n}{2}}g_{n}\frac{f^{[n]}(x)}{\brk[c]{n}_{s,t}!}.
\end{equation}
\end{definition}
More generally,
\begin{definition}
Suppose a function $g_{s,t}(a,b;x,u)$ defined by
\begin{equation*}
g_{s,t}(a,b;x,u)=\sum_{n=0}^{\infty}(a\oplus b)_{1,u}^{n}g_{n}\frac{x^{n}}{\brk[c]{n}_{s,t}!}
\end{equation*}
Then the $(1,u)$-deformed $(s,t)$-composition $g_{s,t}[a,b;f,u]$ is defined to be
\begin{equation}\label{eqn_abcomp}
    g_{s,t}(a,b;x,u)\square f(x)=g_{s,t}[a,b;f(x),u]=\sum_{n=0}^{\infty}(a\oplus b)_{1,u}^{n}g_{n}\frac{f^{[n]}(x)}{\brk[c]{n}_{s,t}!}.
\end{equation}
When $a=0$ and $b=1$, then
\begin{equation*}
    g_{s,t}[0,1;f(x),u]=g_{s,t}[f(x),u].
\end{equation*}
\end{definition}
Then it is easy to obtain the $(s,t)$-chain rule for $(1,u)$-deformed functions.
\begin{proposition}
Set $a,b\in\R$ such that $a\neq-b$. Then
\begin{align}
    \mathbf{D}_{s,t}(g_{s,t}[a,b;f(x),u])&=\Big(a(\mathbf{D}_{s,t}g_{s,t})[a,b;f(x),u]\nonumber\\
    &\hspace{2cm}+b(\mathbf{D}_{s,t}g_{s,t})[a,b;ug(x),u]\Big)(\mathbf{D}_{s,t}f)(x).
\end{align}
\end{proposition}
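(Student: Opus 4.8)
The plan is to differentiate the defining series of the composition term by term and then reorganise the coefficients. First I would apply $\mathbf{D}_{s,t}$ to
\[
g_{s,t}[a,b;f(x),u]=\sum_{n=0}^{\infty}(a\oplus b)_{1,u}^{n}g_{n}\frac{f^{[n]}(x)}{\brk[c]{n}_{s,t}!}
\]
and invoke the $(s,t)$-chain rule for symbolic powers, Eq.(\ref{eqn_der_pow}), namely $\mathbf{D}_{s,t}f^{[n]}(x)=\brk[c]{n}_{s,t}f^{[n-1]}(x)\mathbf{D}_{s,t}f(x)$. The factor $\brk[c]{n}_{s,t}$ cancels the leading factor of $\brk[c]{n}_{s,t}!$, the $n=0$ term drops out, and after pulling $\mathbf{D}_{s,t}f(x)$ outside the sum and shifting the index $n\mapsto n+1$ I expect to arrive at $\mathbf{D}_{s,t}(g_{s,t}[a,b;f(x),u])=\big(\sum_{n=0}^{\infty}(a\oplus b)_{1,u}^{n+1}g_{n+1}f^{[n]}(x)/\brk[c]{n}_{s,t}!\big)\mathbf{D}_{s,t}f(x)$.

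The second step is to split the shifted coefficient using the product recursion $(a\oplus b)_{1,u}^{n+1}=(a+bu^{n})(a\oplus b)_{1,u}^{n}$, which is immediate from the definition of $(a\oplus b)_{1,u}^{n}$. This rewrites the inner sum as
\[
a\sum_{n=0}^{\infty}(a\oplus b)_{1,u}^{n}g_{n+1}\frac{f^{[n]}(x)}{\brk[c]{n}_{s,t}!}+b\sum_{n=0}^{\infty}(a\oplus b)_{1,u}^{n}g_{n+1}\frac{u^{n}f^{[n]}(x)}{\brk[c]{n}_{s,t}!}.
\]
I would then read off each sum as a $(1,u)$-deformed $(s,t)$-composition of the shifted sequence $(g_{n+1})$, which is precisely the series $\mathbf{D}_{s,t}g_{s,t}$. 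By Eq.(\ref{eqn_abcomp}) the first sum is $(\mathbf{D}_{s,t}g_{s,t})[a,b;f(x),u]$. For the second sum I would use the scaling identity $(uf)^{[n]}(x)=u^{n}f^{[n]}(x)$, Eq.(\ref{eqn_comp_cons}), to absorb the factor $u^{n}$ into the argument, so that $u^{n}f^{[n]}(x)=(uf)^{[n]}(x)$ and the second sum becomes $(\mathbf{D}_{s,t}g_{s,t})[a,b;uf(x),u]$. Combining the two identifications and restoring the factor $\mathbf{D}_{s,t}f(x)$ gives the claimed formula.

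The main obstacle here is not analytic but the correct bookkeeping of the composition symbol. Specifically, I must be explicit that $\mathbf{D}_{s,t}g_{s,t}$ is to be read as the sequence shift $g_{n}\mapsto g_{n+1}$ kept inside the same $(a\oplus b)_{1,u}^{n}$ envelope (consistent with $\mathbf{D}_{s,t}\sum_{n}g_{n}x^{n}/\brk[c]{n}_{s,t}!=\sum_{n}g_{n+1}x^{n}/\brk[c]{n}_{s,t}!$), while the $u^{n}$ produced by the recursion is \emph{not} absorbed into the coefficient but into the argument of the composition via $(uf)^{[n]}=u^{n}f^{[n]}$. Getting these two conventions to line up is exactly what produces the two-term right-hand side; as a sanity check, the degenerate case $g_{n}\equiv1$ collapses the composition to the pantograph function and recovers $\mathbf{D}_{s,t}\EE_{s,t}(a,b;x,u)=a\EE_{s,t}(a,b;x,u)+b\EE_{s,t}(a,b;ux,u)$. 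Convergence and term-by-term differentiation are routine at the level of formal power series and I would not dwell on them; the hypothesis $a\neq-b$ merely guarantees that the leading coefficient $(a\oplus b)_{1,u}^{1}=a+b$ does not vanish and plays no further role in the manipulation.
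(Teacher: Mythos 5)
Your proposal is correct and follows essentially the same route as the paper: term-by-term differentiation via the symbolic-power chain rule Eq.(\ref{eqn_der_pow}), an index shift, and the splitting $(a\oplus b)_{1,u}^{n+1}=(a+bu^{n})(a\oplus b)_{1,u}^{n}$ together with $(uf)^{[n]}=u^{n}f^{[n]}$ to identify the two composition terms. You actually spell out the final identification (which the paper asserts in one line) more carefully, and you correctly read the statement's $ug(x)$ as the typo for $uf(x)$ that it is.
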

\begin{proof}
From Eq.(\ref{eqn_der_pow}), we have that
    \begin{align*}
    \mathbf{D}_{s,t}(g_{s,t}[a,b;f(x),u])&=\mathbf{D}_{s,t}\left(\sum_{n=0}^{\infty}(a\oplus b)_{1,u}^{n}g_{n}\frac{f^{[n]}(x)}{\brk[c]{n}_{s,t}!}\right)\\
    &=\sum_{n=1}^{\infty}(a\oplus b)_{1,u}^{n}g_{n}\frac{\brk[c]{n}_{s,t}f^{[n-1]}(x)}{\brk[c]{n}_{s,t}!}\mathbf{D}_{s,t}f(x)\\
    &=\sum_{n=0}^{\infty}(a\oplus b)_{1,u}^{n+1}g_{n+1}\frac{f^{[n]}(x)}{\brk[c]{n}_{s,t}!}\mathbf{D}_{s,t}f(x)\\
    &=\Big(a(\mathbf{D}_{s,t}g_{s,t})[a,b;f(x),u]\\
    &\hspace{2cm}+b(\mathbf{D}_{s,t}g_{s,t})[a,b;uf(x),u]\Big)\mathbf{D}_{s,t}f(x)
\end{align*}
and the proof is reached.
\end{proof}
Immediately, we have the following Corollary.
\begin{corollary}\label{coro_chain_rule}
$\mathbf{D}_{s,t}g_{s,t}[f(x),u]=(\mathbf{D}_{s,t}g_{s,t})\big[uf(x),u\big]\mathbf{D}_{s,t}f(x)$.
\end{corollary}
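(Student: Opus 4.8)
The plan is to obtain this Corollary as the special case $a=0$, $b=1$ of the Proposition immediately preceding it, rather than to re-run the series manipulation from scratch. First I would check that this choice of parameters is admissible for that Proposition: its hypothesis requires $a\neq-b$, and here $0\neq-1$, so the identity of the Proposition applies verbatim.

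Next I would substitute $a=0$ and $b=1$ directly into the Proposition's conclusion. On the right-hand side the first summand carries the factor $a=0$ and therefore drops out, while the second summand carries the factor $b=1$ and survives, giving
\begin{equation*}
    \mathbf{D}_{s,t}(g_{s,t}[0,1;f(x),u])=(\mathbf{D}_{s,t}g_{s,t})[0,1;uf(x),u]\,(\mathbf{D}_{s,t}f)(x).
\end{equation*}
I would then invoke the notational convention recorded in the second Definition, namely $g_{s,t}[0,1;f(x),u]=g_{s,t}[f(x),u]$, applied both to $g_{s,t}$ itself and to its $(s,t)$-derivative $\mathbf{D}_{s,t}g_{s,t}$, in order to rewrite the displayed equation in single-parameter form. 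This is precisely the assertion of the Corollary.

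The only point demanding a moment's care — and the closest thing here to an obstacle — is verifying that the $(1,u)$-deformed composition with $(a,b)=(0,1)$ truly collapses to the single-parameter $u$-deformed composition $g_{s,t}[\cdot,u]$ of Eq.(\ref{eqn_comp}). This amounts to the elementary identity $(0\oplus 1)_{1,u}^{n}=\prod_{k=0}^{n-1}u^{k}=u^{\binom{n}{2}}$, so that the coefficient $(0\oplus 1)_{1,u}^{n}g_{n}$ appearing in Eq.(\ref{eqn_abcomp}) coincides with the coefficient $u^{\binom{n}{2}}g_{n}$ appearing in Eq.(\ref{eqn_comp}). Once this identification is confirmed, the two notations agree term by term and the result follows immediately, with no further computation required.
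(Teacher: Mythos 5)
Your proposal is correct and follows exactly the paper's own route: the paper proves this corollary by setting $a=0$, $b=1$ in the preceding Proposition, which is precisely what you do. Your additional checks --- that $0\neq-1$ satisfies the hypothesis and that $(0\oplus 1)_{1,u}^{n}=u^{\binom{n}{2}}$ reconciles Eq.~(\ref{eqn_abcomp}) with Eq.~(\ref{eqn_comp}) --- are sound and make the one-line argument fully explicit.
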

\begin{proof}
Set $a=0$ and $b=1$ in the above Proposition.
\end{proof}

\begin{definition}\label{def_sub}
For $n\in\N$, define
    \begin{equation}
        \sqint_{f(r)}^{g(r)}x^{n}d_{s,t}x\equiv\left(\int x^{n}d_{s,t}x\right)\Bigg\vert_{f(r)}^{g(r)}=\frac{x^{n+1}\square g(r)-x^{n+1}\square f(r)}{\brk[c]{n+1}_{s,t}}.
    \end{equation}
\end{definition}

We formally state that for all $n,m\in\N$,
    \begin{enumerate}
        \item 
        \begin{equation*}
            \sqint_{f(r)}^{g(r)}(\alpha x^n+\beta x^m)d_{s,t}x=\alpha\sqint_{f(r)}^{g(r)}x^nd_{s,t}x+\beta\sqint_{f(r)}^{g(r)}x^md_{s,t}x.
        \end{equation*}
        \item 
        \begin{equation*}
            \sqint_{f(r)}^{g(r)}\sum_{n=0}^{\infty}a_{n}x^{n}d_{s,t}x=\sum_{n=0}^{\infty}a_{n}\sqint_{f(r)}^{g(r)}x^nd_{s,t}x.
        \end{equation*}
    \end{enumerate}

\begin{theorem}[{\bf Substitution formula}]\label{theo_subst}
For all $n\in\N$,
\begin{enumerate}
    \item 
    \begin{equation}\label{eqn_sub_form1}
        \int_{a}^{b}f^{[n]}(x)\mathbf{D}_{s,t}f(x)d_{s,t}x=\sqint_{f(a)}^{f(b)}x^{n}d_{s,t}x.
    \end{equation}
    \item \begin{equation}\label{eqn_sub_form2}
    \int_{a}^bf[g(x),u]\mathbf{D}_{s,t}g(x)d_{s,t}x=\sqint_{g(a)}^{g(b)}f(x,u)d_{s,t}x.
\end{equation}
\end{enumerate}
\end{theorem}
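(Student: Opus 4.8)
The plan is to prove part \eqref{eqn_sub_form1} directly from the $(s,t)$-chain rule and the fundamental theorem of $(s,t)$-calculus, and then to reduce part \eqref{eqn_sub_form2} to part \eqref{eqn_sub_form1} by expanding the deformed composition into a series and integrating term by term. The whole argument is algebraic once the two named tools are in place; the only genuine work is bookkeeping about what the symbol $\square$ does to monomials.

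First I would rewrite the integrand on the left of \eqref{eqn_sub_form1}. By the $(s,t)$-chain rule \eqref{eqn_der_pow} applied to the $(n+1)$th symbolic power, $\mathbf{D}_{s,t}f^{[n+1]}(x)=\brk[c]{n+1}_{s,t}f^{[n]}(x)\mathbf{D}_{s,t}f(x)$, so that $f^{[n]}(x)\mathbf{D}_{s,t}f(x)=\brk[c]{n+1}_{s,t}^{-1}\mathbf{D}_{s,t}f^{[n+1]}(x)$. Substituting this into the left-hand integral, pulling out the constant $\brk[c]{n+1}_{s,t}^{-1}$, and applying the fundamental theorem of $(s,t)$-calculus \eqref{theo_funda} to the $(s,t)$-integral of an $(s,t)$-derivative, I obtain that the left-hand side equals $\brk[c]{n+1}_{s,t}^{-1}\big(f^{[n+1]}(b)-f^{[n+1]}(a)\big)$. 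It then remains only to recognise this as the right-hand side: by Definition \ref{def_sub} one has $\sqint_{f(a)}^{f(b)}x^{n}d_{s,t}x=\brk[c]{n+1}_{s,t}^{-1}\big(x^{n+1}\square f(b)-x^{n+1}\square f(a)\big)$, and because the symbolic composition carries the monomial $x^{n+1}$ to the symbolic power, i.e. $x^{n+1}\square f=f^{[n+1]}$ (immediate from the definition of $\square$ in \eqref{eqn_comp} together with the identity $x^{[k]}=x^{k}$), the two expressions coincide, which proves \eqref{eqn_sub_form1}.

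For part \eqref{eqn_sub_form2} I would write $f(x,u)=\sum_{n\ge0}u^{\binom{n}{2}}f_{n}x^{n}/\brk[c]{n}_{s,t}!$, so that the definition of the deformed composition \eqref{eqn_comp} gives $f[g(x),u]=\sum_{n\ge0}u^{\binom{n}{2}}f_{n}g^{[n]}(x)/\brk[c]{n}_{s,t}!$. Multiplying by $\mathbf{D}_{s,t}g(x)$, integrating over $[a,b]$, and interchanging the linear $(s,t)$-integral with the sum yields $\sum_{n\ge0}u^{\binom{n}{2}}(f_{n}/\brk[c]{n}_{s,t}!)\int_{a}^{b}g^{[n]}(x)\mathbf{D}_{s,t}g(x)d_{s,t}x$. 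Applying part \eqref{eqn_sub_form1} to $g$ in each summand replaces the $n$th integral by $\sqint_{g(a)}^{g(b)}x^{n}d_{s,t}x$, and the formal linearity of the boxed integral (property 2 following Definition \ref{def_sub}) recollects the series into $\sqint_{g(a)}^{g(b)}f(x,u)d_{s,t}x$, which is the claim.

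The main obstacle is analytic rather than algebraic. Each use of \eqref{theo_funda} requires $f^{[n+1]}$ (respectively $g^{[n+1]}$) to be continuous at $0$ with $\mathbf{D}_{s,t}f^{[n+1]}\in\mathcal{L}_{s,t}^{1}[a,b]$, and the term-by-term integration in part \eqref{eqn_sub_form2} needs the series for $f[g(x),u]\,\mathbf{D}_{s,t}g(x)$ to converge well enough to justify exchanging $\sum$ with $\int\cdot\,d_{s,t}x$. I would handle this by carrying the same integrability hypotheses used for \eqref{theo_funda} and invoking dominated convergence on the defining $q$-series \eqref{eqn_int_def} of the $(s,t)$-integral, so that the interchange of summation and integration is legitimate on the geometric set $[a,b]_{q}$.
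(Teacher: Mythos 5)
Your proposal is correct and follows essentially the same route as the paper: part (1) via the $(s,t)$-chain rule for $f^{[n+1]}$, the fundamental theorem, and Definition \ref{def_sub}, and part (2) by expanding the deformed composition, interchanging sum and integral, and invoking part (1) termwise together with the formal linearity of the boxed integral. Your closing remarks on integrability and the justification of the interchange are a welcome addition, since the paper treats that step only formally.
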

\begin{proof}
From Definition \ref{def_sub}
\begin{align*}
    \int_{a}^{b}f^{[n]}(x)\mathbf{D}_{s,t}f(x)d_{s,t}x&=\frac{1}{\brk[c]{n+1}_{s,t}}\int_{a}^{b}\mathbf{D}_{s,t}f^{[n+1]}(x)d_{s,t}x\\
    &=\frac{f^{[n+1]}(b)-f^{[n+1]}(a)}{\brk[c]{n+1}_{s,t}}=\sqint_{f(a)}^{f(b)}x^{n}d_{s,t}x.
\end{align*}
By using Eq.(\ref{eqn_sub_form1}), we have formally
    \begin{align*}
        \int_{a}^{b}f[g(x),u]\mathbf{D}_{s,t}g(x)d_{s,t}x&=\int_{a}^{b}\left(\sum_{m=0}^{\infty}u^{\binom{m}{2}}f_{m}\frac{g^{[m]}(x)}{\brk[c]{m}_{s,t}!}\mathbf{D}_{s,t}g(x)\right)d_{s,t}x\\
        &=\sum_{m=0}^{\infty}\frac{u^{\binom{m}{2}}}{\brk[c]{m}_{s,t}!}f_{m}\int_{a}^{b}g^{[m]}(x)\mathbf{D}_{s,t}g(x)d_{s,t}x\\
        &=\sum_{m=0}^{\infty}\frac{u^{\binom{m}{2}}}{\brk[c]{m}_{s,t}!}f_{m}\sqint_{g(a)}^{g(b)}x^{m}d_{s,t}x\\
        &=\sqint_{g(a)}^{g(b)}\sum_{m=0}^{\infty}u^{\binom{m}{2}}f_{m}\frac{x^{m}}{\brk[c]{m}_{s,t}!}d_{s,t}x\\
        &=\sqint_{g(a)}^{g(b)}f(x,u)d_{s,t}x.
    \end{align*}
\end{proof}
From Eq.(\ref{eqn_comp}) we have the following composition
\begin{align*}
    \exp_{s,t}[f(x),u]&=\sum_{n=0}^{\infty}u^{\binom{n}{2}}\frac{f^{[n]}(x)}{\brk[c]{n}_{s,t}!}.
\end{align*}
Also, denote
\begin{align*}
    \Exp_{s,t}[f(x)]&=\exp_{s,t}[f(x),\varphi_{s,t}],\\
    \Exp^{\prime}_{s,t}[f(x)]&=\exp_{s,t}[f(x),\varphi_{s,t}^{\prime}]
\end{align*}
Then, $\Exp_{s,t}[x]=\Exp_{s,t}(x)$ and $\Exp_{s,t}^{\prime}[x]=\Exp^{\prime}_{s,t}(x)$. Also
\begin{align}
    \mathbf{D}_{s,t}\Exp_{s,t}[f(x)]&=\Exp_{s,t}[\varphi_{s,t}f(x)]\mathbf{D}_{s,t}f(x),\\
    \mathbf{D}_{s,t}\Exp_{s,t}^\prime[f(x)]&=\Exp_{s,t}^\prime[\varphi_{s,t}^\prime f(x)]\mathbf{D}_{s,t}f(x)
\end{align}
and in general
\begin{align*}
    \mathbf{D}_{s,t}\exp_{s,t}[f(x),u]&=\exp_{s,t}[uf(x),u]\mathbf{D}_{s,t}f(x),\\
    \mathbf{D}_{s,t}\EE_{s,t}[a,b;f(x),u]&=\Big(a\EE_{s,t}[a,b;g(x),u]
    +b\EE_{s,t}[a,b;ug(x),u]\Big)\mathbf{D}_{s,t}f(x).
\end{align*}
\begin{example}
On the one hand,
\begin{align*}
    \brk[c]{n}_{s,t}\int_{0}^{x}r^{n-1}\exp_{s,t}[ur^n,u]d_{s,t}r&=\int_{0}^{x}\mathbf{D}_{s,t}\exp_{s,t}[r^n,u]d_{s,t}r\\
    &=\exp_{s,t}[x^n,u]-1
\end{align*}
and on the other hand,
\begin{align*}
    \brk[c]{n}_{s,t}\int_{0}^{x}r^{n-1}\exp_{s,t}[ur^n,u]d_{s,t}r&=\sqint_{0}^{x^n}\exp_{s,t}(ur,u)d_{s,t}\\
    &=\exp_{s,t}[x^n,u]-1.
\end{align*}
\end{example}

\section{First order functional-difference $(s,t)$-equation}

\subsection{An equation general}

\begin{theorem}
Set $a\neq-b$ and $q=\varphi_{s,t}^{\prime}/\varphi_{s,t}$. Assume $0<\vert q\vert<1$ and suppose $\alpha$ and $\beta$ are continuous functions on an interval $[\eta,x]$, $\eta>0$. Set $A(x)=\int_{\eta}^{x}\alpha(r)d_{q}r$. Then the function $y(x)$ given by
\begin{equation}\label{eqn_sol1_gen}
    y(x)=\frac{1}{\EE_{s,t}[a,b;A(x),u]}\left(\int_{\eta}^{x}\beta(r)\EE_{s,t}[a,b;A(\varphi_{s,t}r),u]d_{s,t}r+G(\log_{q}(x))\right)
\end{equation}
is a solution to the equation
\begin{equation}\label{eqn_general}
\mathbf{D}_{s,t}y+\alpha(x)\frac{(\mathbf{D}_{s,t}E_{s,t})(a,b;x,u)\square A(x)}{\EE_{s,t}[a,b;A(\varphi_{s,t}x),u]}y(\varphi_{s,t}^{\prime}x)=\beta(x),
\end{equation}
where
\begin{equation*}
    (\mathbf{D}_{s,t}E_{s,t})(a,b;x,u)\square A(x)=a\EE_{s,t}[a,b;A(x),u]+b\EE_{s,t}[a,b;uA(x),u],
\end{equation*}
with initial value problem $y(\eta)=G(\log_{q}(\eta))$ for some $G(\log_{q}(x))\in\Per_{s,t}$. If $\eta\geq0$, then the function $y(x)$ given by
\begin{equation}\label{eqn_sol2_gen}
    y(x)=\frac{1}{\EE_{s,t}[a,b;A(x),u]}\left(\int_{\eta}^{x}\beta(r)\EE_{s,t}[a,b;A(\varphi_{s,t}r),u]d_{s,t}r\right)
\end{equation}
is a solution to the Eq.(\ref{eqn_general}) with initial value problem $y(\eta)=\xi$, for $\eta\in\R$.
\end{theorem}
\begin{proof}
By multiplying Eq.(\ref{eqn_general}) by $\EE_{s,t}[a,b;A(\varphi_{s,t}x),u]$, we obtain
\begin{align*}
\EE_{s,t}[a,b;A(\varphi_{s,t}x),u]\mathbf{D}_{s,t}y
&+\alpha(x)\left(a\EE_{s,t}[a,b;A(x),u]+b\EE_{s,t}[a,b;uA(x),u]\right)y(\varphi_{s,t}^{\prime}x)\\
&=\beta(x)\EE_{s,t}[a,b;A(\varphi_{s,t}x),u].
\end{align*}
As 
\begin{align*}
    &\mathbf{D}_{s,t}\left(\EE_{s,t}[a,b;A(x),u]y\right)(x)=\EE_{s,t}[a,b;A(\varphi_{s,t}x),u]\mathbf{D}_{s,t}y(x)\\
    &\hspace{2cm}+\alpha(x)\left(a\EE_{s,t}[a,b;A(x),u]+b\EE_{s,t}[a,b;uA(x),u]\right)y(\varphi_{s,t}^{\prime}x),
\end{align*}
then
\begin{align*}
    \mathbf{D}_{s,t}\left(\EE_{s,t}[a,b;A(x),u]y\right)=\beta(x)\EE_{s,t}[a,b;A(\varphi_{s,t}x),u].
\end{align*}
From Theorem \ref{theo_funda}
\begin{align*}
    \int_{\eta}^{x}\mathbf{D}_{s,t}\left(\EE_{s,t}[a,b;A(r),u]y\right)d_{s,t}r&=\EE_{s,t}[a,b;A(x),u]y(x)-G(\log_{q}(\eta))\\
    &=\int_{\eta}^{x}\beta(r)\EE_{s,t}[a,b;A(\varphi_{s,t}r),u]d_{s,t}r
\end{align*}
and from this, it follows that
\begin{align*}
    y(x)&=\frac{G(\log_{q}(\eta))}{\EE_{s,t}[a,b;A(x),u]}+\frac{1}{\EE_{s,t}[a,b;A(x),u]}\int_{\eta}^{x}\beta(r)\EE_{s,t}[a,b;A(\varphi_{s,t}r),u]d_{s,t}r
\end{align*}
is solution of Eq.(\ref{eqn_general}) if $\eta>0$. By changing $G(\log_{q}(x))$ by $G(\log_{q}(\eta))$ above, then we obtain a more general solution. If $\eta\geq0$, then the solution of Eq.(\ref{eqn_sol2_gen}) is
\begin{align*}
    y(x)&=\frac{\xi}{\EE_{s,t}[a,b;A(x),u]}+\frac{1}{\EE_{s,t}[a,b;A(x),u]}\int_{\eta}^{x}\beta(r)\EE_{s,t}[a,b;A(\varphi_{s,t}r),u]d_{s,t}r
\end{align*}
and the proof is reached.
\end{proof}
By interchanging $\varphi_{s,t}$ and $\varphi_{s,t}^{\prime}$ in the above theorem, we obtain the solution to the equation
\begin{equation}
\mathbf{D}_{s,t}y+\alpha(x)\frac{(\mathbf{D}_{s,t}E_{s,t})(a,b;x,u)\square A(x)}{\EE_{s,t}[a,b;A(\varphi_{s,t}^{\prime}x),u]}y(\varphi_{s,t}x)=\beta(x).
\end{equation}

\subsubsection{Equation $\mathbf{D}_{s,t}y(x)+\alpha(x)\frac{\exp_{s,t}[uA(x),u]}{\exp_{s,t}[A(\varphi_{s,t}x),u]}y(\varphi_{s,t}^{\prime}x)=\beta(x)$}

\begin{corollary}
Set $q=\varphi_{s,t}^{\prime}/\varphi_{s,t}$. Assume $0<\vert q\vert<1$ and suppose $\alpha$ and $\beta$ are continuous functions on an interval $[\eta,x]$, $\eta>0$. Set $A(x)=\int_{\eta}^{x}\alpha(r)d_{q}r$. Then the function $y(x)$ given by
\begin{equation}\label{eqn_sol_c1}
    y(x)=\frac{1}{\exp_{s,t}[A(x),u]}\left(\int_{\eta}^{x}\beta(r)\exp_{s,t}[A(\varphi_{s,t}r),u]d_{s,t}r+G(\log_{q}(x))\right)
\end{equation}
is a solution to the initial value problem
\begin{equation}
\mathbf{D}_{s,t}y+\alpha(x)\frac{\exp_{s,t}[uA(x),u]}{\exp_{s,t}[A(\varphi_{s,t}x),u]}y(\varphi_{s,t}^{\prime}x)=\beta(x),\ \ y(\eta)=G(\log_{q}(\eta))
\end{equation}
for some $G(\log_{q}(x))\in\Per_{s,t}$. If $\eta\geq0$, then the function $y(x)$ given by
\begin{equation}
    y(x)=\frac{1}{\exp_{s,t}[A(x),u]}\left(\int_{\eta}^{x}\beta(r)\exp_{s,t}[A(\varphi_{s,t}r),u]d_{s,t}r+\xi\right),
\end{equation}
is a solution to the initial value problem
\begin{equation}
\mathbf{D}_{s,t}y+\alpha(x)\frac{\exp_{s,t}[uA(x),u]}{\exp_{s,t}[A(\varphi_{s,t}x),u]}y(\varphi_{s,t}^{\prime}x)=\beta(x),\ \ y(\eta)=\xi
\end{equation}
for $\eta\in\R$.
\end{corollary}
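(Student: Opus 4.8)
The plan is to recognize this Corollary as nothing more than the specialization $a=0$, $b=1$ of the Theorem just proved, so that the whole argument reduces to checking that the two statements coincide under this choice of parameters. First I would verify that the hypothesis $a\neq -b$ of the preceding Theorem is satisfied, which is immediate since $0\neq -1$; thus the Theorem applies verbatim with these values and all of its conclusions are available without any new work on the underlying differential-difference machinery.

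Next I would compute the two pantograph compositions appearing in the Theorem at $a=0$, $b=1$. The key observation is the identity $\EE_{s,t}[0,1;f(x),u]=\exp_{s,t}[f(x),u]$, which holds because $(0\oplus 1)_{1,u}^{n}=\prod_{k=0}^{n-1}u^{k}=u^{\binom{n}{2}}$, so the defining series for $\EE_{s,t}[0,1;f(x),u]$ in Eq.(\ref{eqn_abcomp}) reduces term by term to the series for $\exp_{s,t}[f(x),u]$ in Eq.(\ref{eqn_comp}). Applying this to the numerator of the coefficient,
\[
(\mathbf{D}_{s,t}E_{s,t})(0,1;x,u)\square A(x)=0\cdot\EE_{s,t}[0,1;A(x),u]+1\cdot\EE_{s,t}[0,1;uA(x),u]=\exp_{s,t}[uA(x),u],
\]
and applying it to the denominator gives $\EE_{s,t}[0,1;A(\varphi_{s,t}x),u]=\exp_{s,t}[A(\varphi_{s,t}x),u]$. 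Substituting both expressions into the Theorem's equation (\ref{eqn_general}) reproduces exactly the equation stated in the Corollary.

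Finally, I would carry the same substitution through the solution formulas. Replacing $\EE_{s,t}[a,b;\,\cdot\,,u]$ by $\exp_{s,t}[\,\cdot\,,u]$ in (\ref{eqn_sol1_gen}) yields the claimed solution (\ref{eqn_sol_c1}) in the case $\eta>0$ with $G(\log_q(x))\in\Per_{s,t}$, and the same replacement in (\ref{eqn_sol2_gen}) yields the stated solution in the case $\eta\geq0$ with $y(\eta)=\xi$. No step presents a genuine obstacle here; the only point requiring care is confirming that $(0\oplus 1)_{1,u}^{n}$ collapses precisely to the factor $u^{\binom{n}{2}}$ defining the deformed exponential, so that the general $(1,u)$-deformed $(s,t)$-composition degenerates to the ordinary $u$-deformed $(s,t)$-composition uniformly in both the equation and its solution.
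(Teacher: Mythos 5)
Your proposal is correct and is exactly the argument the paper intends: the Corollary is the specialization $a=0$, $b=1$ of the preceding Theorem, using $(0\oplus 1)_{1,u}^{n}=u^{\binom{n}{2}}$ so that $\EE_{s,t}[0,1;\cdot,u]=\exp_{s,t}[\cdot,u]$, and the paper offers no separate proof precisely because this substitution is routine. The only cosmetic point is that the Theorem's displayed formula for the $\eta\geq 0$ case omits the additive constant $\xi$ that appears in its own proof and in the Corollary, so strictly you are specializing the formula from the Theorem's proof rather than its statement.
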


\begin{example}
Let us now take into account the equation
\begin{equation*}
    \mathbf{D}_{s,t}y(x)+x^{n-1}\frac{\exp_{s,t}[ux^{n},u]}{\exp_{s,t}[\varphi_{s,t}^nx^n,u]}y(\varphi_{s,t}^\prime x)=x^{n-1},\ y(0)=\xi
\end{equation*}
where $n\neq0$. From Eq.(\ref{eqn_sol_c1}),
\begin{align*}
    y(x)&=\frac{1}{\exp_{s,t}[x^n,u]}\left(\int_{0}^x r^{n-1}\exp_{s,t}[ur^{n},u]d_{s,t}r+\xi\right)\\
    &=\frac{1}{\exp_{s,t}[x^n,u]}\left(\frac{\exp_{s,t}[x^n,u]-1}{\brk[c]{n}_{s,t}}+\xi\right)\\
    &=\frac{\exp_{s,t}[x^n,u]-1}{\brk[c]{n}_{s,t}\exp_{s,t}[x^n,u]}+\frac{\xi}{\exp_{s,t}[x^n,u]}.
\end{align*}
\end{example}

\subsubsection{Solution of the equation $\mathbf{D}_{s,t}y+\alpha(x)y(\varphi_{s,t}^{\prime}x)=\beta(x)$}

\begin{corollary}
Set $q=\varphi_{s,t}^{\prime}/\varphi_{s,t}$. Assume $0<\vert q\vert<1$ and suppose $\alpha$ and $\beta$ are continuous functions on an interval $[\eta,x]$, $\eta>0$. Set $A(x)=\int_{\eta}^{x}\alpha(r)d_{q}r$. Then the function $y(x)$ given by
\begin{equation}\label{eqn_sol1_qf}
    y(x)=\frac{1}{\Exp_{s,t}[A(x)]}\left(\int_{\eta}^{x}\beta(r)\Exp_{s,t}[A(\varphi_{s,t}r)]d_{s,t}r+G(\log_{q}(x))\right)
\end{equation}
is a solution to the initial value problem
\begin{equation}\label{eqn_lin}
\mathbf{D}_{s,t}y+\alpha(x)y(\varphi_{s,t}^{\prime}x)=\beta(x),\ \ y(\eta)=G(\log_{q}(\eta))
\end{equation}
for some $G(\log_{q}(x))\in\Per_{s,t}$. If $\eta\geq0$, then the function $y(x)$ given by
\begin{equation}
    y(x)=\frac{1}{\Exp_{s,t}[A(x)]}\left(\int_{\eta}^{x}\beta(r)\Exp_{s,t}[A(\varphi_{s,t}r)]d_{s,t}r+\xi\right),
\end{equation}
is a solution to the initial value problem
\begin{equation}\label{eqn_lin2}
\mathbf{D}_{s,t}y+\alpha(x)y(\varphi_{s,t}^{\prime}x)=\beta(x),\ \ y(\eta)=\xi
\end{equation}
for $\eta\in\R$.
\end{corollary}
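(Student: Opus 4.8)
The plan is to obtain this corollary as the special case $a=0$, $b=1$, $u=\varphi_{s,t}$ of the preceding general theorem, so that essentially all of the analytic work is already done; what remains is to check that under this choice every object reduces to the one appearing in the statement. First I would record the collapse of the pantograph to the composed $(s,t)$-exponential: from the special value $\EE_{s,t}(0,a;x,u)=\exp_{s,t}(ax,u)$ with $a=1$, $u=\varphi_{s,t}$, together with the definition $\Exp_{s,t}(z)=\exp_{s,t}(z,\varphi_{s,t})$, one gets $\EE_{s,t}(0,1;z,\varphi_{s,t})=\Exp_{s,t}(z)$, and hence, applied termwise to symbolic powers, $\EE_{s,t}[0,1;f(x),\varphi_{s,t}]=\Exp_{s,t}[f(x)]$. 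This turns the normalizing factor $\EE_{s,t}[a,b;A(x),u]$ into $\Exp_{s,t}[A(x)]$ and the integrand factor $\EE_{s,t}[a,b;A(\varphi_{s,t}r),u]$ into $\Exp_{s,t}[A(\varphi_{s,t}r)]$, which already matches the solution formula to be proved.

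The only genuine point to verify is that the coefficient multiplying $\alpha(x)y(\varphi_{s,t}^{\prime}x)$ reduces from $\frac{(\mathbf{D}_{s,t}E_{s,t})(a,b;x,u)\square A(x)}{\EE_{s,t}[a,b;A(\varphi_{s,t}x),u]}$ to exactly $1$. With $a=0$, $b=1$, $u=\varphi_{s,t}$ the numerator is $b\,\EE_{s,t}[0,1;\varphi_{s,t}A(x),\varphi_{s,t}]=\Exp_{s,t}[\varphi_{s,t}A(x)]$, while the denominator is $\Exp_{s,t}[A(\varphi_{s,t}x)]$, so the coefficient equals $\Exp_{s,t}[\varphi_{s,t}A(x)]/\Exp_{s,t}[A(\varphi_{s,t}x)]$. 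Thus the heart of the proof is the identity
\[
  \Exp_{s,t}[\varphi_{s,t}A(x)]=\Exp_{s,t}[A(\varphi_{s,t}x)],
\]
equivalently $\mathbf{D}_{s,t}\Exp_{s,t}[A(x)]=\alpha(x)\,\Exp_{s,t}[A(\varphi_{s,t}x)]$, which says that the chosen integrating factor $\mu(x)=\Exp_{s,t}[A(x)]$ satisfies the integrating-factor functional equation $\mathbf{D}_{s,t}\mu(x)=\alpha(x)\mu(\varphi_{s,t}x)$. I would prove this by combining the chain rule $\mathbf{D}_{s,t}\Exp_{s,t}[A(x)]=\Exp_{s,t}[\varphi_{s,t}A(x)]\mathbf{D}_{s,t}A(x)$ with $\mathbf{D}_{s,t}A=\alpha$ (the defining property of $A(x)=\int_{\eta}^{x}\alpha(r)d_{q}r$), reducing everything to the commutation of the symbolic $(s,t)$-composition with the dilation $x\mapsto\varphi_{s,t}x$. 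This commutation is the step I expect to be the main obstacle: the symbolic powers $A^{[n]}$ do not transform under dilation the way ordinary powers do, so the equality of $\Exp_{s,t}[\varphi_{s,t}A(x)]$ and $\Exp_{s,t}[A(\varphi_{s,t}x)]$ must be argued carefully rather than term by term, and it is here that the precise convention for forming $A$ and for evaluating a composed function at $\varphi_{s,t}x$ has to be pinned down.

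Once the coefficient is shown to be $1$, the remainder is a transcription of the general theorem's argument. I would multiply the equation by $\mu(\varphi_{s,t}x)=\Exp_{s,t}[A(\varphi_{s,t}x)]$, use the $(s,t)$-product rule together with the integrating-factor identity to rewrite the left-hand side as $\mathbf{D}_{s,t}(\mu(x)y(x))$, obtaining $\mathbf{D}_{s,t}(\Exp_{s,t}[A(x)]y)=\beta(x)\Exp_{s,t}[A(\varphi_{s,t}x)]$. Applying the fundamental theorem of $(s,t)$-calculus, Eq.(\ref{theo_funda}), on $[\eta,x]$ and using $A(\eta)=0$, hence $\Exp_{s,t}[A(\eta)]=1$, gives $\Exp_{s,t}[A(x)]y(x)-y(\eta)=\int_{\eta}^{x}\beta(r)\Exp_{s,t}[A(\varphi_{s,t}r)]d_{s,t}r$, and solving for $y$ yields the stated formula. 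Finally I would dispose of the two initial-value cases exactly as before: for $\eta>0$ the constant of integration is an arbitrary element $G(\log_{q}(x))\in\Per_{s,t}$ of the kernel of $\mathbf{D}_{s,t}$, normalized so that $y(\eta)=G(\log_{q}(\eta))$, while for $\eta\ge0$ one takes the constant to be $\xi$ and reads off $y(\eta)=\xi$.
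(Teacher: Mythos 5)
Your overall strategy --- specialize the general theorem to $a=0$, $b=1$, $u=\varphi_{s,t}$ so that $\EE_{s,t}[0,1;\cdot\,,\varphi_{s,t}]$ collapses to $\Exp_{s,t}[\cdot]$ --- is exactly how the paper obtains this corollary (it supplies no separate proof), and you correctly isolate the one nontrivial point: the coefficient of $\alpha(x)y(\varphi_{s,t}^{\prime}x)$ reduces to $1$ only if
\[
\Exp_{s,t}[\varphi_{s,t}A(x)]=\Exp_{s,t}[A(\varphi_{s,t}x)].
\]
But you stop at announcing that this identity ``must be argued carefully'' without supplying the argument, and that is a genuine gap rather than a formality, because the identity is false for general $\alpha$. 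By Eq.~(\ref{eqn_comp_cons}) the left side is $\sum_n\varphi_{s,t}^{\binom{n}{2}+n}A^{[n]}(x)/\brk[c]{n}_{s,t}!$, while the right side --- which the product-rule computation in the theorem's proof forces to mean the function $F(x)=\Exp_{s,t}[A(x)]$ evaluated at $\varphi_{s,t}x$ --- is $\sum_n\varphi_{s,t}^{\binom{n}{2}}A^{[n]}(\varphi_{s,t}x)/\brk[c]{n}_{s,t}!$. Equality would require $A^{[n]}(\varphi_{s,t}x)=\varphi_{s,t}^{n}A^{[n]}(x)$ for every $n$, i.e.\ that $A$ be homogeneous of degree one. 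Already for $\alpha(x)=x$ one has $A(x)\propto x^{2}$, hence $A^{[n]}(\varphi_{s,t}x)=\varphi_{s,t}^{2n}A^{[n]}(x)$, and the degree-two terms of the two sides carry $\varphi_{s,t}$ versus $\varphi_{s,t}^{2}$. This is consistent with the paper's own preceding corollary and its example, which deliberately retain the uncancelled ratio $\exp_{s,t}[uA(x),u]/\exp_{s,t}[A(\varphi_{s,t}x),u]$ (with denominator $\exp_{s,t}[\varphi_{s,t}^{n}x^{n},u]$, not $\exp_{s,t}[\varphi_{s,t}x^{n},u]$, when $A(x)=x^{n}$). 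So your reduction closes only when $\alpha$ is constant; for general $\alpha$ the corollary can only be rescued by reinterpreting $\Exp_{s,t}[A(\varphi_{s,t}r)]$ in the solution formula as $\Exp_{s,t}[\varphi_{s,t}A(r)]$, which is a different function from the one the theorem's proof produces.

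A second, smaller slip: since $A$ is defined by a $q$-integral, $D_{q}A=\alpha$ gives $\mathbf{D}_{s,t}A(x)=(D_{q}A)(\varphi_{s,t}x)=\alpha(\varphi_{s,t}x)$, not $\alpha(x)$; the paper itself writes $\alpha(\varphi_{s,t}x)$ in the analogous partial-theta corollary, so your chain-rule step needs this adjustment as well. Your final paragraph (multiply by the integrating factor, apply the product rule and the fundamental theorem of $(s,t)$-calculus, then split the two initial-value cases via $\Per_{s,t}$ or the constant $\xi$) matches the theorem's proof and is fine once the two issues above are resolved.
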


\begin{example}\label{exam2}
Let us consider the equation
\begin{equation*}
    \mathbf{D}_{s,t}y(x)-y(\varphi_{s,t}^{\prime}x)=x^{m}
\end{equation*}
where $m\geq0$. From Eq.(\ref{eqn_sol1_qf})
\begin{align*}
    y(x)&=\frac{1}{\Exp_{s,t}[-x]}\left(\int r^{m}\Exp_{s,t}[-\varphi_{s,t}r]d_{s,t}r+G(\log_{q}(x))\right)\\
    &=\frac{1}{\Exp_{s,t}(-x)}\left(\int r^{m}\Exp_{s,t}(-\varphi_{s,t}r)d_{s,t}r+G(\log_{q}(x))\right)
\end{align*}
and from Eq.(\ref{theo_partes2})
\begin{align*}
    \int x^{m}\Exp_{s,t}(-\varphi_{s,t}x)d_{s,t}x&=-\varphi_{s,t}^{\prime(-m)}\int(\varphi_{s,t}^{\prime}x)^{m}\mathbf{D}_{s,t}\Exp_{s,t}(-x)d_{s,t}x\\
    &=-\varphi_{s,t}^{\prime(-m)}x^{m}\Exp_{s,t}(-x)\\
    &\hspace{2cm}+\varphi_{s,t}^{\prime(-m)}\brk[c]{m}_{s,t}\int x^{m-1}\Exp_{s,t}(-\varphi_{s,t}x)d_{s,t}x.
\end{align*}
Then 
\begin{equation*}
    \int x^{m}\Exp_{s,t}(-\varphi_{s,t}x)d_{s,t}x=-\left(\sum_{k=0}^{m}\varphi_{s,t}^{\prime(-m(k+1)+\binom{k+1}{2})}\frac{\brk[c]{m}_{s,t}!}{\brk[c]{m-k}_{s,t}!}x^{m-k}\right)\Exp_{s,t}(-x)
\end{equation*}
and
\begin{equation}
    y(x)=G(\log_{q}(x))\Exp_{s,t}^{\prime}(x)-\sum_{k=0}^{m}\varphi_{s,t}^{\prime(-m(k+1)+\binom{k+1}{2})}\frac{\brk[c]{m}_{s,t}!}{\brk[c]{m-k}_{s,t}!}x^{m-k}.
\end{equation}
\end{example}

\begin{example}
Let us consider the equation
\begin{equation*}
    \mathbf{D}_{s,t}y(x)+y(\varphi_{s,t}x)=\alpha\Exp_{s,t}^{\prime}(\beta x),
\end{equation*}
where $\beta\neq-\varphi_{s,t}^\prime$. From Eq.(\ref{eqn_sol1_qf})
\begin{align*}
    y(x)&=\frac{1}{\Exp_{s,t}(x)}\left(\alpha\int\Exp_{s,t}^{\prime}(\beta x)\Exp_{s,t}(\varphi_{s,t}x)d_{s,t}x+G(\log_{q}(x))\right)\\
    &=\frac{1}{\Exp_{s,t}(x)}\left(\alpha\int\exp_{s,t}((\varphi_{s,t}\oplus\beta)_{\varphi,\varphi^\prime}x)d_{s,t}x+G(\log_{q}(x))\right)\\
    &=\frac{\alpha\varphi_{s,t}^\prime}{\varphi_{s,t}^{\prime}+\beta}\Exp_{s,t}^{\prime}(\beta x/\varphi_{s,t}^\prime)+\frac{G(\log_{q}(x))}{\Exp_{s,t}(x)}.
\end{align*}
\end{example}

\begin{example}
Let us consider the equation
\begin{equation*}
    \mathbf{D}_{s,t}y(x)+y(\varphi_{s,t}x)=\alpha x^{m}\Exp_{s,t}^{\prime}(-\varphi_{s,t}x)
\end{equation*}
From Eq.(\ref{eqn_sol1_qf}),
\begin{align*}
    y(x)&=\frac{1}{\Exp_{s,t}(x)}\left(\alpha\int x^{m}\Exp_{s,t}^{\prime}(-\varphi_{s,t}x)\Exp_{s,t}(\varphi_{s,t}x)d_{s,t}x+G(\log_{q}(x))\right)\\
    &=\frac{1}{\Exp_{s,t}(x)}\left(\alpha\int x^{m}d_{s,t}x+G(\log_{q}(x))\right)\\
    &=\frac{\alpha}{\brk[c]{m+1}_{s,t}}\frac{x^{m+1}}{\Exp_{s,t}(x)}+\frac{G(\log_{q}(x))}{\Exp_{s,t}(x)}
\end{align*}
\end{example}

\subsubsection{Equation involving partial theta function}

\begin{corollary}
Set $q=\varphi_{s,t}^{\prime}/\varphi_{s,t}$. Assume $0<\vert q\vert<1$ and suppose $\alpha$ and $\beta$ are continuous functions on an interval $[\eta,x]$, $\eta>0$. Set $A(x)=\int_{\eta}^{x}\alpha(r)d_{q}r$. Then the function $y(x)$ given by
\begin{equation}
    y(x)=\frac{1}{\Theta_{0}[(1-q)A(x),\varphi_{s,t}^{-1}]}\left(\int_{\eta}^{x}\beta(r)\Theta_{0}[(1-q)A(\varphi_{s,t}r),\varphi_{s,t}^{-1}]d_{s,t}r+G(\log_{q}(x))\right)
\end{equation}
is a solution to the initial value problem
\begin{equation}
\mathbf{D}_{s,t}y(x)
+\alpha(\varphi_{s,t}x)\frac{(D_{q}\Theta_{0}((1-q)x,\varphi_{s,t}^{-1}))\square A(\varphi_{s,t}x)}{\Theta_{0}[(1-q)A(\varphi_{s,t}x),\varphi_{s,t}^{-1}]}y(\varphi_{s,t}^{\prime}x)
=\beta(x),
\end{equation}
$y(\eta)=G(\log_{q}(\eta))$, for some $G(\log_{q}(x))\in\Per_{s,t}$. If $\eta\geq0$, then the function $y(x)$ given by
\begin{equation}
    y(x)=\frac{1}{\Theta_{0}[(1-q)A(x),\varphi_{s,t}^{-1}]}\left(\int_{\eta}^{x}\beta(r)\Theta_{0}[(1-q)A(\varphi_{s,t}r),\varphi_{s,t}^{-1}]d_{s,t}r+\xi\right)
\end{equation}
is a solution to the initial value problem
\begin{equation}
\mathbf{D}_{s,t}y+\alpha(\varphi_{s,t}x)\frac{(D_{q}\Theta_{0}((1-q)x,\varphi_{s,t}^{-1}))\square A(\varphi_{s,t}x)}{\Theta_{0}[(1-q)A(\varphi_{s,t}x),\varphi_{s,t}^{-1}]}y(\varphi_{s,t}^{\prime}x)=\beta(x),\ \ y(\eta)=\xi
\end{equation}
for $\eta\in\R$.
\end{corollary}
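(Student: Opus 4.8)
The plan is to obtain this corollary as a direct specialization of the general Theorem (the one asserting that \eqref{eqn_sol1_gen} solves \eqref{eqn_general}) to the parameter values $a=1$, $b=-q$, $u=q$. The entire bridge between the two statements is the special value
$$
\EE_{s,t}(1,-q;x,q)=\Theta_{0}((1-q)x,\varphi_{s,t}^{-1})
$$
recorded among the special values of the $(s,t)$-pantograph function. First I would substitute $a=1$, $b=-q$, $u=q$ into both the solution formula \eqref{eqn_sol1_gen} and the equation \eqref{eqn_general}. Under this substitution every occurrence of the deformed composition $\EE_{s,t}[1,-q;\,\cdot\,,q]$ is replaced, via the special-value identity carried through the $\square$-composition, by the partial Theta composition $\Theta_{0}[(1-q)\,\cdot\,,\varphi_{s,t}^{-1}]$. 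This alone reproduces the claimed solution formulas in both the $\eta>0$ and $\eta\ge 0$ cases.

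The remaining work is to check that the coefficient of $y(\varphi_{s,t}^{\prime}x)$ in \eqref{eqn_general} turns into the coefficient written in the corollary. Two conversions are involved. Since $A(x)=\int_{\eta}^{x}\alpha(r)d_{q}r$ is a $q$-integral, the fundamental theorem of $q$-calculus gives $D_{q}A(x)=\alpha(x)$, and the correspondence $(\mathbf{D}_{s,t}f)(x)=(D_{q}f)(\varphi_{s,t}x)$ then yields $\mathbf{D}_{s,t}A(x)=\alpha(\varphi_{s,t}x)$; this is precisely the source of the factor $\alpha(\varphi_{s,t}x)$ replacing $\alpha(x)$. The same correspondence converts $(\mathbf{D}_{s,t}\EE_{s,t})(1,-q;x,q)$ into the $q$-derivative $D_{q}\Theta_{0}((1-q)x,\varphi_{s,t}^{-1})$ evaluated at the $\varphi_{s,t}$-scaled argument, which is what forces the argument $A(\varphi_{s,t}x)$ to appear inside the composition in both numerator and denominator. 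Combining these, the coefficient of $y(\varphi_{s,t}^{\prime}x)$ matches the general theorem's $\alpha(x)\,(\mathbf{D}_{s,t}E_{s,t})(a,b;x,u)\square A(x)\big/\EE_{s,t}[a,b;A(\varphi_{s,t}x),u]$ after the substitution, so the conclusion follows immediately from the general Theorem.

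The main obstacle will be the bookkeeping of the $\varphi_{s,t}$-scalings introduced by the $\mathbf{D}_{s,t}\leftrightarrow D_{q}$ correspondence as they propagate through the deformed composition $\square$. Concretely, one must verify that simultaneously (i) replacing $\mathbf{D}_{s,t}\EE_{s,t}$ by $D_{q}\Theta_{0}$ and (ii) shifting the composition argument from $A(x)$ to $A(\varphi_{s,t}x)$ is self-consistent; this is where the constant-multiple rule \eqref{eqn_comp_cons} for symbolic powers and the definition \eqref{eqn_abcomp} of $\square$ must be invoked carefully, since for a general $A$ the symbolic power $A^{[n]}$ does not commute naively with the scaling. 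Once this identification is confirmed, the initial-value analysis is identical to that of the general Theorem: for $\eta>0$ one replaces $G(\log_{q}(\eta))$ by $G(\log_{q}(x))\in\Per_{s,t}$ in the integrated solution, and for $\eta\ge 0$ the constant $G(\log_{q}(\eta))$ is taken to be $\xi$, giving the stated $\xi$-solution.
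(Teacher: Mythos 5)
Your proposal is correct and takes essentially the same route as the paper, which states this corollary without proof as the specialization $a=1$, $b=-q$, $u=q$ of the general theorem via the recorded special value $\EE_{s,t}(1,-q;x,q)=\Theta_{0}((1-q)x,\varphi_{s,t}^{-1})$. Your additional bookkeeping of the $\varphi_{s,t}$-shifts (in particular $\mathbf{D}_{s,t}A(x)=(D_{q}A)(\varphi_{s,t}x)=\alpha(\varphi_{s,t}x)$) correctly explains why the corollary's coefficient carries $\alpha(\varphi_{s,t}x)$ and a $D_{q}$-derivative where the general theorem has $\alpha(x)$ and $\mathbf{D}_{s,t}$.
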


\begin{example}
The initial value problem
\begin{equation*}
    \mathbf{D}_{s,t}y(x)+\frac{\mathbf{D}_{s,t}\Theta_{0}((1-q)x,\varphi_{s,t}^{-1})}{\Theta_{0}((1-q)\varphi_{s,t}x,\varphi_{s,t}^{-1})}y(\varphi_{s,t}^{\prime}x)=1,\hspace{1cm}y(0)=\xi
\end{equation*}
has solution
\begin{align*}
    y(x)&=\frac{1}{\Theta_{0}((1-q)x,\varphi_{s,t}^{-1})}\int_{0}^{x}\Theta_{0}[(1-q)\varphi_{s,t}r,\varphi_{s,t}^{-1}]d_{s,t}r+\frac{\xi}{\Theta_{0}((1-q)x,\varphi_{s,t}^{-1})}\\
    &=\frac{1}{\varphi_{s,t}\Theta_{0}((1-q)x,\varphi_{s,t}^{-1})}\sum_{k=0}^{\infty}(\Theta_{0}((1-q)q^{k}\varphi_{s,t}x,\varphi_{s,t}^{-1})-1)+\frac{\xi}{\Theta_{0}((1-q)x,\varphi_{s,t}^{-1})}.
\end{align*}
If $x=\frac{\varphi_{s,t}^{-1}}{1-q}$, then
\begin{equation*}
    y\left(\frac{\varphi_{s,t}^{-1}}{1-q}\right)=\frac{(\varphi_{s,t}^{-1};\varphi_{s,t}^{-2})_{\infty}}{(\varphi_{s,t}^{-2};\varphi_{s,t}^{-2})_{\infty}}\sum_{k=0}^{\infty}(\Theta_{0}(q^{k},\varphi_{s,t}^{-1})-1)+\frac{(\varphi_{s,t}^{-1};\varphi_{s,t}^{-2})_{\infty}}{(\varphi_{s,t}^{-2};\varphi_{s,t}^{-2})_{\infty}}\xi.
\end{equation*}
\end{example}

\subsection{Equation $\mathbf{D}_{s,t}y=\alpha(ay(x)+by(ux))+\beta(x)$}

\begin{theorem}
Consider the following equation
\begin{equation}\label{eqn_ab_b(x)}
    \mathbf{D}_{s,t}y(x)=\alpha(ay(x)+by(u x))+\beta(x),\ y(0)=a_{0},
\end{equation}
where $\alpha$  is a constant, and $\beta(x)$ is an analytic function with expansion
\begin{equation*}
    \beta(x)=\sum_{n=0}^{\infty}b_{n}x^n.
\end{equation*}
Then its solution is given by
\begin{equation}\label{eqn_sol_ab_b(x)}
    y(x)=a_{0}\EE_{s,t}(a,b;\alpha x,u)+\sum_{n=1}^{\infty}\left(\sum_{k=0}^{n-1}\frac{\brk[c]{k}_{s,t}!\alpha^{n-k-1}}{(a\oplus b)_{1,u}^{k+1}}b_{k}\right)(a\oplus b)_{1,u}^{n}\frac{x^n}{\brk[c]{n}_{s,t}!}.
\end{equation}
\end{theorem}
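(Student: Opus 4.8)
The plan is to attack this by the method of undetermined coefficients, converting the functional-difference equation into an ordinary first-order linear recurrence. I would posit the ansatz $y(x)=\sum_{n=0}^{\infty}c_{n}\frac{x^{n}}{\brk[c]{n}_{s,t}!}$ with $c_{0}=a_{0}$, and exploit the basic action of the $(s,t)$-derivative on the normalized monomials, namely $\mathbf{D}_{s,t}\frac{x^{n}}{\brk[c]{n}_{s,t}!}=\frac{x^{n-1}}{\brk[c]{n-1}_{s,t}!}$, which follows from $\mathbf{D}_{s,t}x^{n}=\brk[c]{n}_{s,t}x^{n-1}$ and the Binet expression $\brk[c]{n}_{s,t}=(\varphi_{s,t}^{n}-\varphi_{s,t}^{\prime n})/(\varphi_{s,t}-\varphi_{s,t}^{\prime})$. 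This yields $\mathbf{D}_{s,t}y(x)=\sum_{n=0}^{\infty}c_{n+1}\frac{x^{n}}{\brk[c]{n}_{s,t}!}$, while $y(ux)=\sum_{n=0}^{\infty}c_{n}u^{n}\frac{x^{n}}{\brk[c]{n}_{s,t}!}$, so that $ay(x)+by(ux)=\sum_{n=0}^{\infty}c_{n}(a+bu^{n})\frac{x^{n}}{\brk[c]{n}_{s,t}!}$.

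Next I would substitute these expansions into Eq.(\ref{eqn_ab_b(x)}). The one subtlety is that $\beta$ is presented in ordinary powers, $\beta(x)=\sum_{n}b_{n}x^{n}$, whereas $y$ lives in the fibotorial-normalized basis; rewriting $b_{n}x^{n}=\brk[c]{n}_{s,t}!\,b_{n}\cdot\frac{x^{n}}{\brk[c]{n}_{s,t}!}$ puts both on the same footing. Comparing the coefficient of $\frac{x^{n}}{\brk[c]{n}_{s,t}!}$ then gives the recurrence
\begin{equation*}
    c_{n+1}=\alpha(a+bu^{n})c_{n}+\brk[c]{n}_{s,t}!\,b_{n},\qquad c_{0}=a_{0},
\end{equation*}
a standard first-order linear recurrence. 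I would solve it by the discrete integrating factor: setting $P_{n}=\prod_{k=0}^{n-1}\alpha(a+bu^{k})=\alpha^{n}(a\oplus b)_{1,u}^{n}$, where the last equality is just the defining product for $(a\oplus b)_{1,u}^{n}$, division by $P_{n+1}$ makes the homogeneous part telescope, producing $c_{n}/P_{n}=a_{0}+\sum_{k=0}^{n-1}\brk[c]{k}_{s,t}!\,b_{k}/P_{k+1}$.

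Finally I would reassemble $y(x)=\sum_{n}c_{n}\frac{x^{n}}{\brk[c]{n}_{s,t}!}$. The portion proportional to $a_{0}$ is $a_{0}\sum_{n}\alpha^{n}(a\oplus b)_{1,u}^{n}\frac{x^{n}}{\brk[c]{n}_{s,t}!}$, which is exactly $a_{0}\EE_{s,t}(a,b;\alpha x,u)$ by Eq.(\ref{eqn_panto}) after absorbing $\alpha^{n}$ into $(\alpha x)^{n}$. For the remaining double sum, substituting $P_{n}=\alpha^{n}(a\oplus b)_{1,u}^{n}$ and $P_{k+1}=\alpha^{k+1}(a\oplus b)_{1,u}^{k+1}$ and collecting the powers of $\alpha$ as $\alpha^{n}/\alpha^{k+1}=\alpha^{n-k-1}$ produces precisely the claimed coefficient $\sum_{k=0}^{n-1}\frac{\brk[c]{k}_{s,t}!\,\alpha^{n-k-1}}{(a\oplus b)_{1,u}^{k+1}}b_{k}$ multiplying $(a\oplus b)_{1,u}^{n}\frac{x^{n}}{\brk[c]{n}_{s,t}!}$, matching Eq.(\ref{eqn_sol_ab_b(x)}). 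The work here is essentially bookkeeping; the main things to watch are the index shift in the $(a\oplus b)_{1,u}^{k+1}$ factor, keeping the ordinary and normalized power bases distinct, and noting that the $n=0$ term carries no inhomogeneous contribution, which is why the particular-solution sum starts at $n=1$. A short remark on formal versus convergent series, invoking the analyticity of $\beta$, would justify the term-by-term application of $\mathbf{D}_{s,t}$.
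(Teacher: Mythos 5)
Your proposal is correct and follows essentially the same route as the paper: a power series ansatz reduces the equation to the first-order linear recurrence $c_{n+1}=\alpha(a+bu^{n})c_{n}+\brk[c]{n}_{s,t}!\,b_{n}$, which is then solved in closed form and reassembled into $a_{0}\EE_{s,t}(a,b;\alpha x,u)$ plus the particular-solution sum. The only cosmetic difference is that you work in the fibotorial-normalized basis $x^{n}/\brk[c]{n}_{s,t}!$ (and make the telescoping via $P_{n}=\alpha^{n}(a\oplus b)_{1,u}^{n}$ explicit), whereas the paper uses ordinary powers $x^{n}$; the two recurrences are equivalent under $c_{n}=a_{n}\brk[c]{n}_{s,t}!$.
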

\begin{proof}
Assuming that the power series expansion of $y(x)$ is
\begin{equation*}
    y(x)=\sum_{n=0}^{\infty}a_{n}x^{n},
\end{equation*}
and substituting it into the above equation and setting all coefficients of each term $x^n$ to be zeros, we obtain the formulas of $a_{n}$ as follows
\begin{equation*}
    a_{n+1}=\frac{a+bu^{n}}{\brk[c]{n+1}_{s,t}}\alpha a_{n}+\frac{b_{n}}{\brk[c]{n+1}_{s,t}}.
\end{equation*}
For $n\geq1$, we have
\begin{equation*}
    a_{n}=\frac{(a\oplus b)_{1,u}^{n}}{\brk[c]{n}_{s,t}!}\alpha^{n}a_{0}+\frac{(a\oplus b)_{1,u}^{n}}{\brk[c]{n}_{s,t}!}\sum_{k=0}^{n-1}\frac{\brk[c]{k}_{s,t}!\alpha^{n-1-k}}{(a\oplus b)_{1,u}^{k+1}}b_{k}.
\end{equation*}
So the solution can be represented by
\begin{align*}
    y(x)&=a_{0}\sum_{n=0}^{\infty}\frac{(a\oplus b)_{1,u}^{n}}{\brk[c]{n}_{s,t}!}\alpha^{n}x^n+\sum_{n=1}^{\infty}\frac{(a\oplus b)_{1,u}^{n}}{\brk[c]{n}_{s,t}!}\left(\sum_{k=0}^{n-1}\frac{\brk[c]{k}_{s,t}!\alpha^{n-k-1}}{(a\oplus b)_{1,u}^{k+1}}b_{k}\right)x^{n}\\
    &=a_{0}\EE_{s,t}(a,b;\alpha x,u)+\sum_{n=1}^{\infty}\frac{(a\oplus b)_{1,u}^{n}}{\brk[c]{n}_{s,t}!}\left(\sum_{k=0}^{n-1}\frac{\brk[c]{k}_{s,t}!\alpha^{n-k-1}}{(a\oplus b)_{1,u}^{k+1}}b_{k}\right)x^{n}.
\end{align*}
The proof is completed.
\end{proof}

\begin{corollary}\label{cor_ulin_eqn}
Consider the following equation
\begin{equation}\label{eqn_ab(x)}
    \mathbf{D}_{s,t}y(x)=\alpha y(ux)+\beta(x),\ y(0)=a_{0},
\end{equation}
where $\alpha$  is a constant, and $\beta(x)$ is an analytic function with expansion
\begin{equation*}
    \beta(x)=\sum_{n=0}^{\infty}b_{n}x^n.
\end{equation*}
Then its solution is given by
\begin{equation}\label{eqn_sol_ab(x)}
    y(x)=a_{0}\exp_{s,t}(\alpha x,u)+\sum_{n=1}^{\infty}\left(\sum_{k=0}^{n-1}\frac{\brk[c]{k}_{s,t}!\alpha^{n-1-k}}{u^{\binom{k+1}{2}}}b_{k}\right)u^{\binom{n}{2}}\frac{x^n}{\brk[c]{n}_{s,t}!}.
\end{equation}
\end{corollary}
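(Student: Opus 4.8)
The plan is to derive this corollary directly from the preceding Theorem by specializing the parameters to $a=0$ and $b=1$, so that no fresh recurrence analysis is needed. First I would note that with $a=0$ and $b=1$ the driving term $\alpha(ay(x)+by(ux))$ collapses to $\alpha y(ux)$, so Eq.(\ref{eqn_ab_b(x)}) becomes exactly Eq.(\ref{eqn_ab(x)}), with the same initial condition $y(0)=a_0$ and the same analytic datum $\beta(x)=\sum_{n\ge0}b_nx^n$. Hence the Theorem applies and its solution formula can be read off after substituting $a=0$, $b=1$.

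Next I would evaluate the two pieces of $(s,t)$-pantograph data occurring in that formula. From the definition of the twisted product,
\begin{equation*}
(0\oplus1)_{1,u}^{n}=\prod_{k=0}^{n-1}\bigl(0+1\cdot u^{k}\bigr)=u^{0+1+\cdots+(n-1)}=u^{\binom{n}{2}},
\end{equation*}
so in particular $(0\oplus1)_{1,u}^{k+1}=u^{\binom{k+1}{2}}$; here one must assume $u\neq0$ so that these factors, which appear in the denominators of the Theorem's solution, are nonzero. Moreover, the list of special values of the pantograph function gives $\EE_{s,t}(0,a;x,u)=\exp_{s,t}(ax,u)$, whence $\EE_{s,t}(0,1;\alpha x,u)=\exp_{s,t}(\alpha x,u)$.

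Substituting these identities into the Theorem's solution
\begin{equation*}
y(x)=a_{0}\EE_{s,t}(a,b;\alpha x,u)+\sum_{n=1}^{\infty}\left(\sum_{k=0}^{n-1}\frac{\brk[c]{k}_{s,t}!\,\alpha^{n-k-1}}{(a\oplus b)_{1,u}^{k+1}}b_{k}\right)(a\oplus b)_{1,u}^{n}\frac{x^{n}}{\brk[c]{n}_{s,t}!}
\end{equation*}
then replaces the leading pantograph function by $\exp_{s,t}(\alpha x,u)$, the inner denominator by $u^{\binom{k+1}{2}}$, and the outer factor by $u^{\binom{n}{2}}$, reproducing Eq.(\ref{eqn_sol_ab(x)}) verbatim. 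Since this is a pure specialization there is no real obstacle: the only point needing attention is the exponent bookkeeping produced by the telescoping product $(0\oplus1)_{1,u}^{\bullet}\to u^{\binom{\bullet}{2}}$ together with the identification $\EE_{s,t}(0,1;\alpha x,u)=\exp_{s,t}(\alpha x,u)$. Should one prefer a self-contained argument, the same power-series recurrence $a_{n+1}=\tfrac{a+bu^{n}}{\brk[c]{n+1}_{s,t}}\alpha a_{n}+\tfrac{b_{n}}{\brk[c]{n+1}_{s,t}}$ used in the Theorem specializes, at $a=0$, $b=1$, to $a_{n+1}=\tfrac{u^{n}}{\brk[c]{n+1}_{s,t}}\alpha a_{n}+\tfrac{b_{n}}{\brk[c]{n+1}_{s,t}}$ and can be solved in closed form to give the same answer.
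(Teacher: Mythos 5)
Your proof is correct and follows exactly the route the paper intends: the corollary is the specialization $a=0$, $b=1$ of the preceding theorem, with $(0\oplus 1)_{1,u}^{n}=u^{\binom{n}{2}}$ and $\EE_{s,t}(0,1;\alpha x,u)=\exp_{s,t}(\alpha x,u)$ converting the theorem's formula into Eq.~(\ref{eqn_sol_ab(x)}). Your remark that $u\neq 0$ is needed for the denominators is a worthwhile precision the paper leaves implicit.
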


\begin{corollary}
Set $q=\varphi_{s,t}^{\prime}/\varphi_{s,t}$. Consider the following equation
\begin{equation}
    \mathbf{D}_{s,t}y(x)=y(x)-qy(qx)+\beta(x),\ y(0)=a_{0},
\end{equation}
where $\alpha$  is a constant, and $\beta(x)$ is an analytic function with expansion
\begin{equation*}
    \beta(x)=\sum_{n=0}^{\infty}b_{n}x^n.
\end{equation*}
Then its solution is given by
\begin{equation}
    y(x)=a_{0}\Theta_{0}((1-q))x,\varphi_{s,t}^{-1})+\sum_{n=1}^{\infty}\left(\sum_{k=0}^{n-1}\frac{\varphi_{s,t}^{\binom{k}{2}}}{(1-q)^{k}(1-q^{k+1})}b_{k}\right)(1-q)^{n}\varphi_{s,t}^{-\binom{n}{2}}x^n.
\end{equation}
\end{corollary}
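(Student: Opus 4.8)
The plan is to recognize this Corollary as the specialization of the immediately preceding Theorem obtained by setting $\alpha=1$, $a=1$, $b=-q$, and $u=q$, and then to rewrite the resulting closed form in terms of the partial Theta function and the $q$-Pochhammer symbol. First I would confirm that the governing equation matches: with these parameter choices the equation $\mathbf{D}_{s,t}y=\alpha(ay(x)+by(ux))+\beta(x)$ becomes exactly $\mathbf{D}_{s,t}y=y(x)-qy(qx)+\beta(x)$, so the Theorem applies verbatim and I may simply transcribe its solution formula and simplify.

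Next I would reduce the three building blocks of that solution formula under the substitution. The key computation is the $\oplus$-power: from its definition, $(1\oplus(-q))_{1,q}^{n}=\prod_{k=0}^{n-1}(1+(-q)q^{k})=\prod_{k=0}^{n-1}(1-q^{k+1})=(q;q)_{n}$, and likewise $(1\oplus(-q))_{1,q}^{k+1}=(q;q)_{k+1}$. For the homogeneous term I would invoke the special value $\EE_{s,t}(1,-q;x,q)=\Theta_{0}((1-q)x,\varphi_{s,t}^{-1})$ recorded among the special values of the pantograph function, which turns $a_{0}\EE_{s,t}(a,b;\alpha x,u)$ directly into $a_{0}\Theta_{0}((1-q)x,\varphi_{s,t}^{-1})$, matching the leading term of the claim.

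The only remaining work is bookkeeping with the fibotorial via the identity $\brk[c]{n}_{s,t}!=\varphi_{s,t}^{\binom{n}{2}}(q;q)_{n}/(1-q)^{n}$. Applying it to the outer factor $(a\oplus b)_{1,u}^{n}\,x^{n}/\brk[c]{n}_{s,t}!$, the two copies of $(q;q)_{n}$ cancel and the factor collapses to $(1-q)^{n}\varphi_{s,t}^{-\binom{n}{2}}x^{n}$. For the inner coefficient I would use the same identity on $\brk[c]{k}_{s,t}!$ together with the telescoping $(q;q)_{k+1}=(q;q)_{k}(1-q^{k+1})$; the common $(q;q)_{k}$ then cancels, leaving $\varphi_{s,t}^{\binom{k}{2}}\big/\big((1-q)^{k}(1-q^{k+1})\big)\,b_{k}$, which is precisely the stated inner summand (here the $\alpha^{n-k-1}$ factor disappears since $\alpha=1$). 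Reassembling the pieces then reproduces the claimed formula exactly. I expect no genuine obstacle, since the argument is pure specialization followed by algebraic simplification; the one place to stay careful is keeping the $q$-Pochhammer cancellations aligned, both the $(q;q)_{n}$ in the outer term and the $(q;q)_{k}$ telescoping in the inner sum, as a single shifted index would corrupt the $\varphi_{s,t}$ exponents.
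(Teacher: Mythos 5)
Your proposal is correct and is exactly the derivation the paper intends: the corollary is stated without proof as the specialization $\alpha=1$, $a=1$, $b=-q$, $u=q$ of the preceding theorem, and your computations $(1\oplus(-q))_{1,q}^{n}=(q;q)_{n}$, $\brk[c]{n}_{s,t}!=\varphi_{s,t}^{\binom{n}{2}}(q;q)_{n}/(1-q)^{n}$, and the special value $\EE_{s,t}(1,-q;x,q)=\Theta_{0}((1-q)x,\varphi_{s,t}^{-1})$ carry it out correctly. No gaps.
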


\begin{example}\label{exam7}
The solution of
\begin{equation*}
    \mathbf{D}_{s,t}y(x)=\alpha(ay(x)+by(ux))+\delta+\epsilon x,\ y(0)=a_{0}
\end{equation*}
is     
\begin{align*}
    y(x)&=\left(c+\frac{\delta}{\alpha(a\oplus b)_{1,u}^{1}}+\frac{\epsilon}{\alpha^{2}(a\oplus b)_{1,u}^{2}}\right)\EE_{s,t}(a,b;\alpha x,u)-\frac{\delta}{\alpha(a\oplus b)_{1,u}^{1}}-\frac{\epsilon}{\alpha^{2}(a\oplus b)_{1,u}^{2}}
\end{align*}
where $c=a_{0}$.
\end{example}

\begin{example}
The solution of
\begin{equation*}
    \mathbf{D}_{s,t}y(x)=\alpha(ay(x)+by(ux))+x^{m},\ y(0)=a_{0}
\end{equation*}
is 
\begin{equation*}
    y(x)=c\EE_{s,t}(a,b;\alpha x,u)+\frac{\brk[c]{m}_{s,t}!}{\alpha^{m+1}(a\oplus b)^{m+1}_{1,u}}\left(\EE_{s,t}(a,b;\alpha x,u)-\sum_{n=0}^{m}\frac{(a\oplus b)^{n}_{1,u}}{\brk[c]{n}_{s,t}!}(\alpha x)^n\right)
\end{equation*}
where $c=a_{0}$.
\end{example}

\begin{theorem}
The solution of 
\begin{equation*}
    \mathbf{D}_{s,t}y(x)=\varphi_{s,t}^{\prime}(ay(x)+by(u x))+\beta(a\EE_{s,t}(a,b,u;\varphi_{s,t}x)+b\EE_{s,t}(a,b,u;\varphi_{s,t}ux))
\end{equation*}
is given by 
\begin{equation*}
    y(x)=c\EE_{s,t}(a,b;\varphi_{s,t}^{\prime}x,u)+\beta x(a\EE_{s,t}(a,b;x,u)+b\EE_{s,t}(a,b;u x,u)),
\end{equation*}
where $c=a_{0}$.
\end{theorem}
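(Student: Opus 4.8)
The plan is to read this equation as a special case of the earlier theorem on $\mathbf{D}_{s,t}y(x)=\alpha(ay(x)+by(ux))+\beta(x)$ whose solution is Eq.(\ref{eqn_sol_ab_b(x)}), taken with $\alpha=\varphi_{s,t}^{\prime}$ and with the analytic forcing term
\[
\beta(x)=\beta\bigl(a\EE_{s,t}(a,b;\varphi_{s,t}x,u)+b\EE_{s,t}(a,b;\varphi_{s,t}ux,u)\bigr)
\]
(the argument order $\EE_{s,t}(a,b,u;z)$ in the statement being the same object as $\EE_{s,t}(a,b;z,u)$). With this identification the homogeneous contribution $a_{0}\EE_{s,t}(a,b;\alpha x,u)$ is immediately $c\EE_{s,t}(a,b;\varphi_{s,t}^{\prime}x,u)$ with $c=a_{0}$, so no work is needed there and everything reduces to recovering the particular part.

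First I would expand the forcing term as a power series $\beta(x)=\sum_{n\geq0}b_{n}x^{n}$. Using the defining series Eq.(\ref{eqn_panto}) together with the scaling property, the two summands contribute $a(a\oplus b)_{1,u}^{n}\varphi_{s,t}^{n}$ and $b(a\oplus b)_{1,u}^{n}\varphi_{s,t}^{n}u^{n}$ to the coefficient of $x^{n}/\brk[c]{n}_{s,t}!$; since $(a\oplus b)_{1,u}^{n}(a+bu^{n})=(a\oplus b)_{1,u}^{n+1}$, these collapse to $b_{n}=\beta\,(a\oplus b)_{1,u}^{n+1}\varphi_{s,t}^{n}/\brk[c]{n}_{s,t}!$.

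Next I would substitute these $b_{k}$ into the particular-solution sum of Eq.(\ref{eqn_sol_ab_b(x)}). The factor $(a\oplus b)_{1,u}^{k+1}$ in the denominator cancels against the same factor inside $b_{k}$, and $\brk[c]{k}_{s,t}!$ cancels $1/\brk[c]{k}_{s,t}!$, so the inner sum becomes $\beta\sum_{k=0}^{n-1}\varphi_{s,t}^{\prime(n-1-k)}\varphi_{s,t}^{k}$. The step I expect to be the crux is recognizing this finite geometric sum as the Binet form of the generalized Fibonacci polynomial: because $\varphi_{s,t}$ and $\varphi_{s,t}^{\prime}$ are the two roots of $x^{2}-sx-t=0$, the recurrence $\brk[c]{n+2}_{s,t}=s\brk[c]{n+1}_{s,t}+t\brk[c]{n}_{s,t}$ with $\brk[c]{0}_{s,t}=0$, $\brk[c]{1}_{s,t}=1$ gives $\brk[c]{n}_{s,t}=(\varphi_{s,t}^{n}-\varphi_{s,t}^{\prime n})/(\varphi_{s,t}-\varphi_{s,t}^{\prime})=\sum_{k=0}^{n-1}\varphi_{s,t}^{\prime(n-1-k)}\varphi_{s,t}^{k}$. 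Hence the inner sum equals exactly $\beta\brk[c]{n}_{s,t}$.

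Finally I would resum. The factor $\brk[c]{n}_{s,t}$ turns $1/\brk[c]{n}_{s,t}!$ into $1/\brk[c]{n-1}_{s,t}!$; reindexing $m=n-1$ and pulling out $\beta x$ yields $\beta x\sum_{m\geq0}(a\oplus b)_{1,u}^{m+1}x^{m}/\brk[c]{m}_{s,t}!$. Splitting $(a\oplus b)_{1,u}^{m+1}=(a\oplus b)_{1,u}^{m}(a+bu^{m})$ and recognizing the two resulting series as $\EE_{s,t}(a,b;x,u)$ and $\EE_{s,t}(a,b;ux,u)$ gives the particular term $\beta x\bigl(a\EE_{s,t}(a,b;x,u)+b\EE_{s,t}(a,b;ux,u)\bigr)$, which together with the homogeneous part completes the identification. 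As a consistency check one could instead verify the answer directly, applying $\mathbf{D}_{s,t}$ through the product rule $\mathbf{D}_{s,t}(fg)(x)=g(\varphi_{s,t}x)(\mathbf{D}_{s,t}f)(x)+f(\varphi_{s,t}^{\prime}x)(\mathbf{D}_{s,t}g)(x)$ and the derivative identity $\mathbf{D}_{s,t}\EE_{s,t}(a,b;x,u)=a\EE_{s,t}(a,b;x,u)+b\EE_{s,t}(a,b;ux,u)$, but that route is more computation-heavy and I would keep it only as a cross-check.
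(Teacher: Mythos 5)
Your proposal follows the paper's proof essentially verbatim: both specialize Eq.(\ref{eqn_sol_ab_b(x)}) with $\alpha=\varphi_{s,t}^{\prime}$, compute $b_{k}=\beta(a\oplus b)_{1,u}^{k+1}\varphi_{s,t}^{k}/\brk[c]{k}_{s,t}!$, cancel, recognize $\sum_{k=0}^{n-1}\varphi_{s,t}^{k}\varphi_{s,t}^{\prime(n-1-k)}=\brk[c]{n}_{s,t}$, and resum after reindexing. The only difference is cosmetic: you make the Binet-form identity explicit where the paper uses it silently, which if anything improves the exposition.
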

\begin{proof}
The power series expansion of $\beta(a\EE_{s,t}(a,b;\varphi_{s,t}x,u)+b\EE_{s,t}(a,b;\varphi_{s,t}ux,u))$ is
\begin{equation*}
    \beta(a\EE_{s,t}(a,b;\varphi_{s,t}x,u)+b\EE_{s,t}(a,b;\varphi_{s,t}ux,u))=\beta\sum_{k=0}^{\infty}(a\oplus b)_{1,u}^{k+1}\varphi_{s,t}^{k}\frac{x^k}{\brk[c]{k}_{s,t}!}
\end{equation*}
which gives
\begin{equation*}
    b_{k}=\beta\frac{(a\oplus b)_{1,u}^{k+1}}{\brk[c]{k}_{s,t}!}\varphi_{s,t}^{k}.
\end{equation*}
Substituting it into the Eq.(\ref{eqn_sol_ab_b(x)}) yields
\begin{align*}
    y(x)&=a_{0}\EE_{s,t}(a,b;\varphi_{s,t}^{\prime}x,u)+\beta\sum_{n=1}^{\infty}\left(\sum_{k=0}^{n-1}\frac{\brk[c]{k}_{s,t}!\varphi_{s,t}^{\prime(n-1-k)}}{(a\oplus b)_{1,u}^{k+1}}\frac{(a\oplus b)_{1,u}^{k+1}}{\brk[c]{k}_{s,t}!}\varphi_{s,t}^{k}\right)(a\oplus b)_{1,u}^{n}\frac{x^n}{\brk[c]{n}_{s,t}!}\\
    &=a_{0}\EE_{s,t}(a,b;\varphi_{s,t}^{\prime}x,u)+\beta\sum_{n=1}^{\infty}\left(\sum_{k=0}^{n-1}\varphi_{s,t}^{k}\varphi_{s,t}^{\prime(n-1-k)}\right)(a\oplus b)_{1,u}^{n}\frac{x^n}{\brk[c]{n}_{s,t}!}\\
    &=a_{0}\EE_{s,t}(a,b;\varphi_{s,t}^{\prime}x,u)+\beta\sum_{n=1}^{\infty}\brk[c]{n}_{s,t}(a\oplus b)_{1,u}^{n}\frac{x^n}{\brk[c]{n}_{s,t}!}\\
    &=a_{0}\EE_{s,t}(a,b;\varphi_{s,t}^{\prime}x,u)+\beta\sum_{n=1}^{\infty}(a\oplus b)_{1,u}^{n}\frac{x^n}{\brk[c]{n-1}_{s,t}!}\\
    &=a_{0}\EE_{s,t}(a,b;\varphi_{s,t}^{\prime}x,u)+\beta x\sum_{n=0}^{\infty}(a\oplus b)_{1,u}^{n+1}\frac{x^n}{\brk[c]{n}_{s,t}!}\\
    &=a_{0}\EE_{s,t}(a,b;\varphi_{s,t}^{\prime}x,u)+\beta x(a\EE_{s,t}(a,b;x,u)+b\EE_{s,t}(a,b;ux,u)).
\end{align*}
The proof is completed.
\end{proof}

\begin{corollary}
The solution of 
\begin{equation*}
    \mathbf{D}_{s,t}y(x)=\varphi_{s,t}^{\prime}y(u x)+\beta\exp_{s,t}(\varphi_{s,t}u x,u)
\end{equation*}
is given by 
\begin{equation*}
    y(x)=c\exp_{s,t}(\varphi_{s,t}^{\prime}x,u)+\beta x\exp_{s,t}(u x,u),
\end{equation*}
where $c=a_{0}$.
\end{corollary}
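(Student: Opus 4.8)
The plan is to obtain this corollary as a direct specialization of the immediately preceding theorem, setting $a=0$ and $b=1$. No new computation is really required; the work is entirely in recognizing that the two displayed equations collapse onto the ones in the corollary under this choice of parameters.

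First I would invoke the special value
\begin{equation*}
    \EE_{s,t}(0,c;x,u)=\exp_{s,t}(cx,u),
\end{equation*}
recorded among the special values of the $(s,t)$-pantograph function. Taking $c=1$ gives $\EE_{s,t}(0,1;z,u)=\exp_{s,t}(z,u)$ for any argument $z$. This single identity converts every pantograph function appearing in the theorem into a deformed $(s,t)$-exponential.

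Next I would substitute $a=0$, $b=1$ into the governing equation of the theorem. The term $\varphi_{s,t}^{\prime}(a\,y(x)+b\,y(ux))$ reduces to $\varphi_{s,t}^{\prime}y(ux)$, and the forcing term $\beta\big(a\EE_{s,t}(a,b;\varphi_{s,t}x,u)+b\EE_{s,t}(a,b;\varphi_{s,t}ux,u)\big)$ reduces to $\beta\,\EE_{s,t}(0,1;\varphi_{s,t}ux,u)=\beta\exp_{s,t}(\varphi_{s,t}ux,u)$, which is precisely the equation stated in the corollary. Applying the same substitution to the solution formula, the term $c\,\EE_{s,t}(a,b;\varphi_{s,t}^{\prime}x,u)$ becomes $c\exp_{s,t}(\varphi_{s,t}^{\prime}x,u)$, while $\beta x\big(a\EE_{s,t}(a,b;x,u)+b\EE_{s,t}(a,b;ux,u)\big)$ collapses to $\beta x\exp_{s,t}(ux,u)$, matching the claimed solution.

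There is essentially no obstacle here: the only point demanding care is to confirm that the special-value identity applies verbatim to each of the four pantograph arguments involved ($\varphi_{s,t}x$, $\varphi_{s,t}ux$, $\varphi_{s,t}^{\prime}x$, and $ux$), which it does since that identity holds for an arbitrary argument. I would therefore present the proof in a single sentence, stating that the result follows by putting $a=0$ and $b=1$ in the theorem and using $\EE_{s,t}(0,1;z,u)=\exp_{s,t}(z,u)$, optionally exhibiting the two reduced displays for the reader's convenience.
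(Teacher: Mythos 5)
Your proposal is correct and is exactly the route the paper intends: the corollary is the specialization $a=0$, $b=1$ of the preceding theorem, with the special value $\EE_{s,t}(0,1;z,u)=\exp_{s,t}(z,u)$ converting each pantograph function into a deformed exponential (the paper states the corollary without proof, leaving precisely this substitution to the reader).
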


\begin{corollary}
The solution of 
\begin{equation*}
    \mathbf{D}_{s,t}y(x)=\varphi_{s,t}^{\prime}(y(x)-qy(qx))+\beta(\Theta_{0}((1-q)\varphi_{s,t}x,\varphi_{s,t}^{-1})-q\Theta_{0}((1-q))\varphi_{s,t}qx,\varphi_{s,t}^{-1}))
\end{equation*}
is given by 
\begin{equation*}
    y(x)=c\Theta_{0}((1-q)\varphi_{s,t}^{\prime}x,\varphi_{s,t}^{-1})+\beta x(\Theta_{0}((1-q))x,\varphi_{s,t}^{-1})-q\Theta_{0}((1-q))qx,\varphi_{s,t}^{-1})),
\end{equation*}
where $c=a_{0}$.
\end{corollary}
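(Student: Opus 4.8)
The plan is to obtain this corollary as an immediate specialization of the preceding theorem rather than by re-running the power-series argument. First I would set the three parameters $a=1$, $b=-q$, and $u=q$ in that theorem. With this choice the homogeneous driving term $\varphi_{s,t}^{\prime}(ay(x)+by(ux))$ becomes exactly $\varphi_{s,t}^{\prime}(y(x)-qy(qx))$, matching the left-hand structure of the equation stated in the corollary.

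Next I would invoke the special value $\EE_{s,t}(1,-q;x,q)=\Theta_{0}((1-q)x,\varphi_{s,t}^{-1})$ recorded among the special values of the $(s,t)$-pantograph function. Since that identity holds for a free argument, substituting $\varphi_{s,t}x$, $\varphi_{s,t}qx$, $\varphi_{s,t}^{\prime}x$, $x$, and $qx$ for the argument converts every pantograph factor appearing in the theorem into the corresponding partial-Theta factor. In particular, the inhomogeneous term $\beta(a\EE_{s,t}(a,b;\varphi_{s,t}x,u)+b\EE_{s,t}(a,b;\varphi_{s,t}ux,u))$ turns into $\beta(\Theta_{0}((1-q)\varphi_{s,t}x,\varphi_{s,t}^{-1})-q\Theta_{0}((1-q)\varphi_{s,t}qx,\varphi_{s,t}^{-1}))$, which is precisely the forcing term of the corollary.

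Finally, applying the same substitution to the closed-form solution $y(x)=c\EE_{s,t}(a,b;\varphi_{s,t}^{\prime}x,u)+\beta x(a\EE_{s,t}(a,b;x,u)+b\EE_{s,t}(a,b;ux,u))$ delivers $y(x)=c\Theta_{0}((1-q)\varphi_{s,t}^{\prime}x,\varphi_{s,t}^{-1})+\beta x(\Theta_{0}((1-q)x,\varphi_{s,t}^{-1})-q\Theta_{0}((1-q)qx,\varphi_{s,t}^{-1}))$, the asserted solution. There is essentially no obstacle here: the entire content is the correct bookkeeping of the substitutions $a\mapsto 1$, $b\mapsto -q$, $u\mapsto q$ together with one special-value identity. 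The only point meriting a moment's care is checking that the scaling inside $\Theta_0$ is handled consistently, namely that $\EE_{s,t}(1,-q;cx,q)=\Theta_{0}((1-q)cx,\varphi_{s,t}^{-1})$ for each scalar $c\in\{\varphi_{s,t},\varphi_{s,t}q,\varphi_{s,t}^{\prime},1,q\}$; but this is immediate, since the special value is stated for an arbitrary free argument.
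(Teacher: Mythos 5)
Your proposal is correct and is exactly the route the paper intends: the corollary is stated without proof as the specialization $a=1$, $b=-q$, $u=q$ of the preceding theorem, combined with the listed special value $\EE_{s,t}(1,-q;x,q)=\Theta_{0}((1-q)x,\varphi_{s,t}^{-1})$ applied with the various scaled arguments. Your check that the special-value identity holds for a free argument (so the substitutions $\varphi_{s,t}x$, $\varphi_{s,t}qx$, $\varphi_{s,t}^{\prime}x$, $x$, $qx$ are all legitimate) is the only point of care, and you handled it.
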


\subsection{Operator method}
We use an operator method to deal with a linear proportional difference equation. Two basic operator are $(s,t)$-derivative operator $\mathbf{D}_{s,t}$ and the scale $T_{u}$ defined by
\begin{equation*}
    T_{u}y(x)=y(u x).
\end{equation*}
Two basic properties of these two operators are
\begin{align*}
    \mathbf{D}_{s,t}T_{u}&=uT_{u}\mathbf{D}_{s,t},\\
    T_{u}T_{v}&=T_{uv}.
\end{align*}

\begin{theorem}
For the first-order equation 
\begin{equation}
    \mathbf{D}_{s,t}y(x)=a\beta y(x)+ \gamma T_{u}y(x)+\delta\EE_{s,t}(a,b;\alpha x,u)
\end{equation}
the general solution is given by
\begin{equation}
    y(x)=c\EE_{s,t}(a\beta,\gamma;x,u)+\frac{u\delta}{b\alpha-u\gamma}\EE_{s,t}(a,b;\alpha x/u,u)
\end{equation}
\end{theorem}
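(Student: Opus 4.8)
The plan is to read the equation in operator form as $(\mathbf{D}_{s,t}-a\beta-\gamma T_{u})y=\delta\,\EE_{s,t}(a,b;\alpha x,u)$ and to write the general solution as a kernel element plus one particular solution. The homogeneous part is immediate from the defining property recorded after Eq.(\ref{eqn_panto}): since $\EE_{s,t}(a\beta,\gamma;x,u)$ solves $\mathbf{D}_{s,t}Y=a\beta\,Y(x)+\gamma\,Y(ux)=(a\beta+\gamma T_{u})Y$, the family $c\,\EE_{s,t}(a\beta,\gamma;x,u)$ (with $c$ a constant, or more generally a $q$-periodic coefficient in $\Per_{s,t}$) lies in the kernel of $\mathbf{D}_{s,t}-a\beta-\gamma T_{u}$. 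What remains is to exhibit one solution of the inhomogeneous problem.

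For the particular solution I would try the single-term ansatz $y_{p}=\lambda\,\EE_{s,t}(a,b;\alpha x/u,u)$. The motivation is that the scale operator returns the forcing term, $T_{u}\EE_{s,t}(a,b;\alpha x/u,u)=\EE_{s,t}(a,b;\alpha x,u)$, straight from $T_{u}y(x)=y(ux)$. To evaluate the left-hand side I would differentiate the series $\EE_{s,t}(a,b;\alpha x/u,u)=\sum_{n\ge0}(a\oplus b)_{1,u}^{n}(\alpha/u)^{n}x^{n}/\brk[c]{n}_{s,t}!$ term by term using $\mathbf{D}_{s,t}x^{n}=\brk[c]{n}_{s,t}x^{n-1}$, reindex, and split the sum with the factorization $(a\oplus b)_{1,u}^{n+1}=(a+bu^{n})(a\oplus b)_{1,u}^{n}$. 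This yields
\begin{equation*}
\mathbf{D}_{s,t}\EE_{s,t}(a,b;\alpha x/u,u)=\frac{a\alpha}{u}\EE_{s,t}(a,b;\alpha x/u,u)+\frac{b\alpha}{u}\EE_{s,t}(a,b;\alpha x,u),
\end{equation*}
the same computation already used implicitly for the scaling identity $\EE_{s,t}(ac,bc;x,u)=\EE_{s,t}(a,b;cx,u)$.

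Substituting $y_{p}$ into $\mathbf{D}_{s,t}-a\beta-\gamma T_{u}$ and collecting, the result is a combination of the two series $\EE_{s,t}(a,b;\alpha x/u,u)$ and $\EE_{s,t}(a,b;\alpha x,u)$ with coefficients $\lambda\,a(\alpha/u-\beta)$ and $\lambda(b\alpha/u-\gamma)$ respectively. Matching against $\delta\,\EE_{s,t}(a,b;\alpha x,u)$ then fixes $\lambda(b\alpha/u-\gamma)=\delta$, i.e. $\lambda=u\delta/(b\alpha-u\gamma)$, well defined under the non-resonance condition $b\alpha\neq u\gamma$; adding $y_{p}$ to the homogeneous family gives the stated solution. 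I expect the crux to be precisely this coefficient matching: because the $x^{n}$-coefficients of the two series differ by the factor $u^{-n}$, they are linearly independent, so the one-term ansatz can succeed only when the residual coefficient $a(\alpha/u-\beta)$ of $\EE_{s,t}(a,b;\alpha x/u,u)$ vanishes. Verifying that this term indeed drops out — that the scaling $\alpha/u$ in the ansatz is compatible with the coefficient $\beta$ multiplying $y(x)$ — is the step I would check most carefully, and as a cross-check I would re-derive $\lambda$ by substituting the full claimed $y(x)$ and matching coefficients of $x^{n}$ through the recursion for $(a\oplus b)_{1,u}^{n}$.
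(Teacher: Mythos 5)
Your proposal follows essentially the same route as the paper: the homogeneous family $c\,\EE_{s,t}(a\beta,\gamma;x,u)$ comes from the defining equation of the pantograph function, and your one-term ansatz with coefficient matching is exactly the paper's operator identity $(\mathbf{D}_{s,t}-a\beta-\gamma T_{u})\EE_{s,t}(a,b;\beta x,u)=(b\beta-\gamma)\EE_{s,t}(a,b;u\beta x,u)$ followed by the substitution $\beta=\alpha/u$. The concern you flag at the end is warranted and is in fact a gap in the paper's own argument: that identity cancels the $a\beta y(x)$ term only because the argument scale of the pantograph function equals the coefficient $\beta$ in the operator, so ``taking $\beta=\alpha/u$'' is not a free choice of ansatz but a constraint on the given data; your residual coefficient $\lambda a(\alpha/u-\beta)$ vanishes only when $a=0$ or $\beta=\alpha/u$, and since $\EE_{s,t}(a,b;\alpha x/u,u)$ and $\EE_{s,t}(a,b;\alpha x,u)$ are linearly independent, the stated formula solves the equation only under that side condition (together with $b\alpha\neq u\gamma$).
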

\begin{proof}
Formally, we have 
\begin{equation*}
    y(x)=\delta(\mathbf{D}_{s,t}-a\beta1-\gamma T_{u})^{-1}\EE_{s,t}(a,b;\alpha x,u).
\end{equation*}
To compute the right side of the above equation, we use the following formula 
\begin{equation*}
    (\mathbf{D}_{s,t}-a\beta1-\gamma T_{u})\EE_{s,t}(a,b;\beta x,u)=(b\beta-\gamma)\EE_{s,t}(a,b;u\beta x,u).
\end{equation*}
So, taking $\beta=\frac{\alpha}{u}$ gives
\begin{equation*}
    (\mathbf{D}_{s,t}-a\frac{\alpha}{u}1-\gamma T_{u})^{-1}\EE_{s,t}(a,b;\alpha x,u)=\frac{1}{\frac{b\alpha}{u}-\gamma}\EE_{s,t}(a,b;\alpha x/u,u).
\end{equation*}
Therefore, a special solution is given by 
\begin{equation*}
    y^{\ast}(x)=\frac{u\delta}{b\alpha-u\gamma}\EE_{s,t}(a,b;\alpha x/u,u)
\end{equation*}
and the general solution is 
\begin{equation*}
    y(x)=c\EE_{s,t}(a\beta,\gamma; x,u)+\frac{u\delta}{b\alpha-u\gamma}\EE_{s,t}(a,b;\alpha x/u,u).
\end{equation*}
\end{proof}

\subsection{Deformed $(s,t)$-Bernoulli Equation}
In this section we will focus on solving $(s,t)$-equations in the form
\begin{multline}\label{eqn_berno1}
    D_{\varphi^{n-1},\varphi^{\prime(n-1)}}y(x)+\alpha(x)\frac{(D_{q}E_{s,t})(a,b;x,u)\square A(x)}{\EE_{s,t}[a,b;A(\varphi_{s,t}x),u]}y(\varphi_{s,t}^{n-1}x)\\
    =\beta(x)\prod_{j=0}^{\infty}\frac{y(q^{j}\varphi_{s,t}^{n-1}x)}{y(q^{n+j}\varphi_{s,t}^{n-1}x)}
\end{multline}
and
\begin{multline}\label{eqn_berno2}
    D_{\varphi^{n-1},\varphi^{\prime(n-1)}}y(x)+\alpha(x)\frac{(D_{q}E_{s,t})(a,b;x,u)\square A(x)}{\EE_{s,t}[a,b;A(\varphi_{s,t}^{\prime}x),u]}y(\varphi_{s,t}^{\prime(n-1)}x)\\=\beta(x)\prod_{j=0}^{\infty}\frac{y(q^{j}\varphi_{s,t}^{n-1}x)}{y(q^{n+j}\varphi_{s,t}^{n-1}x)}
\end{multline}
where $\alpha(x)$ and $\beta(x)$ are continuous functions, $n$ is any real number, and where
\begin{equation*}
    D_{\varphi^{n-1},\varphi^{\prime(n-1)}}f(x)=\frac{f(\varphi_{s,t}^{n-1}x)-f(\varphi_{s,t}^{\prime(n-1)}x)}{(\varphi_{s,t}^{n-1}-\varphi_{s,t}^{\prime(n-1)})x}
\end{equation*}
is a $(p,q)$-derivative. Note that $n\neq0$ and $n\neq1$. Also, note that when $s=2$ and $t=-1$, those $(s,t)$-difference equations Eqs.(\ref{eqn_berno1},\ref{eqn_berno2}) lead to the usual Bernoulli equations.

To solve $(s,t)$-difference equation Eq.(\ref{eqn_berno1}), it is first divided by $\prod_{j=0}^{\infty}\frac{y(q^{j}\varphi_{s,t}^{n-1}x)}{y(q^{n+j}\varphi_{s,t}^{n-1}x)}$,
\begin{equation*}
    D_{\varphi^{n-1},\varphi^{\prime(n-1)}}y(x)\prod_{j=0}^{\infty}\frac{y(q^{n+j}\varphi_{s,t}^{n-1}x)}{y(q^{j}\varphi_{s,t}^{n-1}x)}+\alpha(x)\prod_{j=0}^{\infty}\frac{y(q^{n+j}\varphi_{s,t}^{n-1}x)}{y(q^{j+1}\varphi_{s,t}^{n-1}x)}=\beta(x).
\end{equation*}
Substitution of $z(x)=\prod_{j=0}^{\infty}\frac{y(q^{n-1+j}\varphi_{s,t}^{n-2}x)}{y(q^{j}\varphi_{s,t}^{n-2}x)}$ is used to convert the above equation into a $(s,t)$-difference equation in terms of $z(x)$. The $(s,t)$-derivative of $z(x)$ is as follows
\begin{align*}
    \mathbf{D}_{s,t}z(x)&=\mathbf{D}_{s,t}\left(\prod_{j=0}^{\infty}\frac{y(q^{n-1+j}\varphi_{s,t}^{n-2}x)}{y(q^{j}\varphi_{s,t}^{n-2}x)}\right)\\
    &=\frac{1}{(\varphi_{s,t}-\varphi_{s,t}^{\prime})x}\left(\prod_{j=0}^{\infty}\frac{y(q^{n-1+j}\varphi_{s,t}^{n-1}x)}{y(q^{j}\varphi_{s,t}^{n-1}x)}-\prod_{j=0}^{\infty}\frac{y(q^{n-1+j}\varphi_{s,t}^{n-2}\varphi_{s,t}^{\prime}x)}{y(q^{j}\varphi_{s,t}^{n-2}\varphi_{s,t}^{\prime}x)}\right)\\
    &=-\frac{\varphi_{s,t}^{n-1}-\varphi_{s,t}^{\prime(n-1)}}{(\varphi_{s,t}-\varphi_{s,t}^{\prime})}\frac{y(\varphi_{s,t}^{n-1}x)-y(\varphi_{s,t}^{\prime(n-1)}x)}{(\varphi_{s,t}^{n-1}-\varphi_{s,t}^{\prime(n-1)})x}\prod_{j=0}^{\infty}\frac{y(q^{n+j}\varphi_{s,t}^{n-1}x)}{y(q^{j}\varphi_{s,t}^{n-1}x)}\\
    &=-\brk[c]{n-1}_{s,t}D_{\varphi^{n-1},\varphi^{\prime(n-1)}}y(x)\prod_{j=0}^{\infty}\frac{y(q^{n+j}\varphi_{s,t}^{n-1}x)}{y(q^{j}\varphi_{s,t}^{n-1}x)}.
\end{align*}
Then the $(s,t)$-difference equation equivalent is
\begin{equation}\label{eqn_berno1t}
    -\frac{1}{\brk[c]{n-1}_{s,t}}\mathbf{D}_{s,t}z(x)+\alpha(x)\frac{(D_{q}E_{s,t})(a,b;x,u)\square A(x)}{\EE_{s,t}[a,b;A(\varphi_{s,t}x),u]}z(\varphi_{s,t}^{\prime}x)=\beta(x).
\end{equation}
The solution $z(x)$ is used to find the required solution $y(x)$. Let us rewrite it as
\begin{equation*}
    y(x)=\begin{cases}
        y(q^{n-1}x)\frac{z(qx/\varphi_{s,t}^{n-2})}{z(x/\varphi_{s,t}^{n-2})},&\text{ if }n\neq2;\\
        \frac{1}{z(x)},&\text{ if }n=2.
    \end{cases}
\end{equation*}
The previous expression can be rewrite for $y(x)$ as follows:
\begin{enumerate}
    \item For $0<\vert q\vert<1$, $n>1$ and $n\neq2$:
    \begin{equation*}
        y(x)=y(0)\prod_{i=0}^{\infty}\frac{z(q^{i(n-1)+1}x/\varphi_{s,t}^{n-2})}{z(q^{i(n-1)}x/\varphi_{s,t}^{n-2})}.
    \end{equation*}
    \item For $\vert q\vert>1$ and $n<1$:
    \begin{equation*}
        y(x)=y(0)\prod_{i=0}^{\infty}\frac{z(q^{i(n-1)+1}x/\varphi_{s,t}^{n-2})}{z(q^{i(n-1)}x/\varphi_{s,t}^{n-2})}.
    \end{equation*}
    \item For $n=2$,
    \begin{equation*}
        y(x)=\frac{1}{z(x)}.
    \end{equation*}
    \item Otherwise
    \begin{equation*}
        y(x)=y(\infty)\prod_{i=0}^{\infty}\frac{z(q^{i(n-1)+1}x/\varphi_{s,t}^{n-2})}{z(q^{i(n-1)}x/\varphi_{s,t}^{n-2})}.
    \end{equation*}
\end{enumerate}
In a similar way, the $(s,t)$-difference equation in Eq.(\ref{eqn_berno2}) is converted to the following $(s,t)$-difference equation in term of $z(x)$
\begin{equation}\label{eqn_berno2t}
    -\frac{1}{\brk[c]{n-1}_{s,t}}\mathbf{D}_{s,t}z(x)+\alpha( x)\frac{(D_{q}E_{s,t})(a,b;x,u)\square A(x)}{\EE_{s,t}[a,b;A(\varphi_{s,t}^{\prime}x),u]}z(\varphi_{s,t}x)=\beta(x)
\end{equation}
and the required solution is found to be as the previous one. Finally, the Eqs.(\ref{eqn_berno1t},\ref{eqn_berno2t}) are solved for $z(x)$ as previously carried out.

\begin{example}
The equation
\begin{equation*}
    D_{\varphi,\varphi^\prime}y(x)+y(\varphi_{s,t}x)=x^{2}y(\varphi_{s,t}x)y(\varphi_{s,t}^{\prime}x)
\end{equation*}
is a $(s,t)$-Bernoulli difference equation for $n=2$. Let $z(x)=\frac{1}{y(x)}$. Now the $(s,t)$-difference equation is transformed on $z(x)$, to
\begin{equation*}
    \mathbf{D}_{s,t}z(x)-z(\varphi_{s,t}^{\prime}x)=-x^{m}.
\end{equation*}
From Example \ref{exam2}, the solution is
\begin{equation*}
    z(x)=G(\log_{q}(x))\Exp_{s,t}^{\prime}(x)+\sum_{k=0}^{m}\varphi_{s,t}^{\prime(-m(k+1)+\binom{k+1}{2})}\frac{\brk[c]{m}_{s,t}!}{\brk[c]{m-k}_{s,t}!}x^{m-k}.
\end{equation*}
\end{example}

Finally, we define the following $u$-deformed $(s,t)$-Bernoulli difference equations
\begin{equation}\label{eqn_uberno1}
    D_{\varphi^{n-1},\varphi^{\prime(n-1)}}y(x)+\alpha y(\varphi_{s,t}^{n-1}ux)=\beta(x)\prod_{j=0}^{\infty}\frac{y(q^{j}\varphi_{s,t}^{n-1}ux)}{y(q^{n+j}\varphi_{s,t}^{n-1}ux)}
\end{equation}
and
\begin{equation}\label{eqn_uberno2}
    D_{\varphi^{n-1},\varphi^{\prime(n-1)}}y(x)+\alpha y(\varphi_{s,t}^{\prime(n-1)}ux)=\beta(x)\prod_{j=0}^{\infty}\frac{y(q^{j}\varphi_{s,t}^{n-1}ux)}{y(q^{n+j}\varphi_{s,t}^{n-1}ux)}
\end{equation}
with initial value problem $y(0)=a_{0}$. With the substitution 
\begin{equation*}
    z(x)=\prod_{j=0}^{\infty}\frac{y(q^{n-1+j}\varphi_{s,t}^{n-2}ux)}{y(q^{j}\varphi_{s,t}^{n-2}ux)}
\end{equation*}
the Eqs.(\ref{eqn_uberno1},\ref{eqn_uberno2}) are transformed into the equations
\begin{equation*}
    -\frac{1}{\brk[c]{n-1}_{s,t}}\mathbf{D}_{s,t}z(x)+\alpha z(\varphi_{s,t}^{\prime}ux)=\beta(x)
\end{equation*}
and
\begin{equation*}
    -\frac{1}{\brk[c]{n-1}_{s,t}}\mathbf{D}_{s,t}z(x)+\alpha z(\varphi_{s,t}ux)=\beta(x)
\end{equation*}
which can be solved by using the Corollary \ref{cor_ulin_eqn}.

\end{document}